\providecommand{\mathscr}{\mathcal}
\newtheorem{thm}{Theorem}[section]
\newtheorem{lem}{Lemma}[thm]
\newtheorem{prop}{Proposition}[thm]
\numberwithin{equation}{section}
\theoremstyle{definition}
\newtheorem{definition}{Definition}[thm]
\newtheorem{rem}{Remark}[thm]
\newtheorem{exa}{Example}[thm]
\DeclareMathOperator{\Id}{Id}
\DeclareMathOperator{\FT}{\mathcal{F}}
\DeclareMathOperator{\FTh}{\widehat{\mathcal{F}}}
\DeclareMathOperator{\M}{\mathcal{M}}
\DeclareMathOperator{\Mh}{\widehat{\mathcal{M}}}
\DeclareMathOperator{\sinc}{sinc}
\DeclareMathOperator{\D}{\mathcal{D}}
\DeclareMathOperator{\LL}{\mathcal{L}}
\DeclareMathOperator{\ZZ}{\mathcal{Z}}
\DeclareMathOperator{\Dh}{\widehat{\mathcal{D}}}
\DeclareMathOperator{\HH}{\mathcal{H}}
\DeclareMathOperator{\VN}{VN}
\DeclareMathOperator{\G}{\mathbb{G}}
\DeclareMathOperator{\Gh}{\widehat{\mathbb{G}}}
\DeclareMathOperator{\coproduct}{\Delta}
\begin{document}

\title{H\"ormander-Mikhlin type theorem on non-commutative spaces}

\author[R. Akylzhanov]{Rauan Akylzhanov}

\address{
Rauan Akylzhanov:
 \endgraf
 Former address: School of Mathematical Sciences, Queen Mary University of London, London,
 \endgraf
 UK
 \endgraf
  {\it E-mail address} {\rm akylzhanov.r@gmail.com}
  }

\author[M. Ruzhansky]{Michael Ruzhansky}

\address{
 Michael Ruzhansky:
  \endgraf
 Department of Mathematics: Analysis, Logic and Discrete Mathematics,
  \endgraf
 Ghent University, Ghent,
 \endgraf
  Belgium
  \endgraf
  and
 \endgraf
 School of Mathematical Sciences, Queen Mary University of London, London,
 \endgraf
 UK
 \endgraf
  {\it E-mail address} {\rm michael.ruzhansky@ugent.be}
  }

\author[K. Tulenov]{Kanat Tulenov}

\address{
Kanat Tulenov:
\endgraf
Department of Mathematics: Analysis, Logic and Discrete Mathematics
\endgraf
Ghent University, Ghent,
\endgraf
Belgium
\endgraf
Institute of Mathematics and Mathematical Modeling, 050010, Almaty,
\endgraf
Kazakhstan
\endgraf
{\it E-mail address} {\rm kanat.tulenov@ugent.be}
}
\begin{abstract}
In this paper, we introduce a Fourier-type formalism on non-commutative spaces. As a result, we obtain two versions of H\"ormander-Mikhlin $L^{p}$-multiplier theorem: on general von Neumann algebras and semi-finite von Neumann algebras respectively. We develop elements of noncommutative Littlewood-Paley theory to connect with classical symbolic estimates. In the particular case of $\mathbb{R}$, we explicitly show how to recover a sharp version of the classical H\"ormander $L^{p}$-multiplier theorem, which was obtained by Grafakos and Slav\'ikov\'a in \cite{GS19}. Finally, we apply the main results to obtain time-asymptotics of abstract Klein-Gordon equation on semi-finite von Neumann algebras.
\end{abstract}
\subjclass[2020]{46L51, 47L25, 42B15, 43A85, 43A15, 42A38, 43A32, 35S05.}

\keywords{Non-commutative space, semifinite von Neumann algebra, Fourier multiplier, H\"ormander-Mikhlin theorem, spectral multipliers.}

\maketitle

\section{Introduction}

Let $\sigma$ be a bounded function on $\mathbb{R}^{n}.$ Then for any function $f$ in the Schwartz space $\mathcal{S}(\mathbb{R}^{n})$ define a linear operator $A_{\sigma}$ on $\mathcal{S}(\mathbb{R}^{n})$ as follows
\begin{equation}\label{Fourier-multip}
A_{\sigma}f(\xi)=\int_{\mathbb{R}^n}\widehat{f}(\eta)\sigma(\eta)e^{2\pi i(\xi,\eta)}d\eta,
\end{equation}
where
$\widehat{f}$ is the Fourier transform
$$\widehat{f}(\xi)=\int_{\mathbb{R}^n}f(\eta)e^{-2\pi i(\xi,\eta)}d\eta, \quad \xi\in \mathbb{R}^n,$$ of the function $f,$ and $(\xi,\eta)$ denotes the standard inner product of vectors $\xi$ and $\eta$ in $\mathbb{R}^n.$
One of the main problems in harmonic analysis is to find a sufficient condition on $\sigma$ that guarantees that the operator $A_{\sigma}$ is bounded on $L^p(\mathbb{R}^n)$ for a given $1<p<\infty.$ In this case, the operator $A_{\sigma}$ will be called Fourier multiplier with the symbol $\sigma.$ In \cite{M56}, Mikhlin showed that if the condition
$$|\partial^{\alpha}\sigma(\xi)|\leq C_{\alpha}|\xi|^{-|\alpha|}, \quad 0\neq \xi\in\mathbb{R}^n,$$
holds for any multi-indices $\alpha$ such that $|\alpha|\leq [\frac{n}{2}]+1,$ then the operator $A_{\sigma}$ has a bounded extension from $L^p(\mathbb{R}^n)$ into itself for any $1<p<\infty.$ Later, H\"ormander  extended in \cite{Hor} Mikhlin's result. Indeed, H\"ormander's theorem says that if $(I-\Delta_{\mathbb{R}^{n}})^{s/2}$  is the operator given on the Fourier transform by multiplication by $(1+4\pi^{2}|\xi|^{2})^{s/2}$ for some $s>0$ and let $\Psi$ be a Schwartz function on $\mathbb{R}^{n}$ whose Fourier transform is supported in the annulus $\frac{1}{2}<|\xi|<2$ and satisfies $\sum\limits_{j \in \mathbb{Z}} \widehat{\Psi}\left(2^{-j} \xi\right)=1,\xi\neq 0$. Moreover, if for some $1\leq r \leq 2$ and $s>n/r,$ the function $\sigma$ satisfies the condition
\begin{equation}\label{eq:horm_cond}
\sup_{j \in \mathbb{Z}} \left\|(I-\Delta_{\mathbb{R}^{n}})^{\frac{s}{2}}\left[\widehat{\Psi} \sigma(2^{j} \cdot)\right]\right\|_{L^{\frac{n}{s},\,1}\left(\mathbb{R}^{n}\right)}<\infty.
\end{equation}
This is a Littlewood--Paley formulation: the symbol is localised to dyadic annuli via $\sigma(2^j\cdot)$ and a smooth $\Psi$ supported in $\tfrac{1}{2}<|\xi|<2$ with $\sum_j \widehat{\Psi}(2^{-j}\xi)=1$; it is the standard way to verify multiplier bounds in the Euclidean setting.
Then $A_{\sigma}$ has a bounded extension from $L^p(\mathbb{R}^n)$ into itself for any $1<p<\infty.$ In \cite{CT77}, Cald\'eron and
Torchinsky obtained that if condition \eqref{eq:horm_cond} holds with $p$ satisfying
$\left|\frac{1}{p}-\frac{1}{2}\right|<\frac{s}{n}$ and $\left|\frac{1}{p}-\frac{1}{2}\right|=\frac{1}{r},$ then $A_{\sigma}$ is bounded from $L^p(\mathbb{R}^n)$ into itself. Very recently, Grafakos and Slav\'ikov\'a in \cite{GS19}  improved H\"ormander's theorem and the result of Cald\'eron and Torchinsky by replacing the norm in \eqref{eq:horm_cond} with the Lorentz space $L^{\frac{n}{s}, 1}\left(\mathbb{R}^{n}\right)$-quasi-norm. Their main result is given as follows:

\begin{thm}\label{thm_grafakos} \cite[Theorem 1.1]{GS19}. Let $\Psi$ be a Schwartz function on $\mathbb{R}^{n}$ whose Fourier transform is supported in the annulus $\frac{1}{2}<|\xi|<2$ and satisfies $\sum\limits_{j \in \mathbb{Z}} \widehat{\Psi}\left(2^{-j} \xi\right)=1, \quad \xi \neq 0$. Let $1<p<\infty$ and let $s \in(0, n), n \in \mathbb{N}$, satisfy
$$
\left|\frac{1}{p}-\frac{1}{2}\right|<\frac{s}{n}.
$$
Then for all functions $f \in \mathcal{S}\left(\mathbb{R}^{n}\right)$ we have the following a priori estimate
\begin{equation}\label{thm_grafakos_eq}
\|A_{\sigma}f\|_{L^{p}(\mathbb{R}^{n})} \leq C 
\sup_{j \in \mathbb{Z}}
\left\|(I-\Delta_{\mathbb{R}^{n}})^{\frac{s}{2}}\left[\widehat{\Psi} \sigma(2^{j} \cdot)\right]\right\|_{L^{\frac{n}{s}, 1}\left(\mathbb{R}^{n}\right)}
\|f \|_{L^{p}(\mathbb{R}^{n})}
\end{equation}
with some constant $C>0.$
\end{thm}
The main aim of this paper is to obtain variants of the H\"ormander $L^{p}$-multiplier theorem on general von Neumann algebras.

Let $\M$ be a von Neumann algebra and let a linear operator $\D\colon L^{2}(\M) \rightarrow L^{2}(\M)$ be affiliated with the dual von Neumann algebra $\Mh$.

We prove the $L^p(\M)\to L^p(\M)$ boundedness of Fourier multipliers in two complementary forms: a \emph{global} estimate (operator norm on $\widehat{\M}$) and a \emph{local} estimate (Littlewood--Paley decomposition, fibre-wise norms), the latter recovering the classical annulus condition of Grafakos--Slav\'ikov\'a \cite{GS19} on $\mathbb{R}^n$ (Example~\ref{EX_case_of_Rn}).
The \emph{global form} (Section~\ref{sec:mihlin}, Theorem~\ref{THM:Mihlin-Hormander-general}) takes the form:
\begin{equation}\label{1.2}
\|A\|_{L^{p}(\M) \rightarrow L^{p}(\M)}
\leq
\left\|\Dh^{-\beta}\right\|_{L^{\infty}(\Mh)}
\left\|\Dh^{\beta} A\right\|_{L^{r}(\Mh)}, \quad \frac{1}{r}=2\left\lvert\frac{1}{p}-\frac{1}{2}\right\rvert, \quad 1<p<\infty.
\end{equation}
The \emph{local (Littlewood--Paley) form} (Section~\ref{SEC_noncommutative_littlewood_paley}, Theorem~\ref{THM:Mihlin-Hormander-general-noncomm-L-Paley}) takes the following form
under a direct-integral decomposition $\widehat{\M} = \int_{\widehat{Z}}^\oplus \Mh_\lambda\,d\mu(\lambda)$ and a Littlewood--Paley decomposition over $\widehat{Z}$; here the $T_j\colon \widehat{Z}\to \widehat{Z}$ are the measurable maps of that decomposition (so $T_j^{-1}(E_j)=E_0$ for all $j$ and they have uniformly bounded Radon--Nikodým derivatives; see Section~\ref{SEC_noncommutative_littlewood_paley}).
\begin{equation}\label{intro:LP-local}
\|A\|_{L^{p}(\M) \rightarrow L^{p}(\M)}
\lesssim
\sup_{j\in\mathbb{Z}}
\Bigl\|\bigl\|\LL(\lambda)A(T_j(\lambda))\bigr\|_{L^{r,\infty}(\Mh_\lambda)}\Bigr\|_{L^{r,1}(\widehat{Z})}, \quad \frac{1}{r}=2\left\lvert\frac{1}{p}-\frac{1}{2}\right\rvert, \quad 1<p<\infty.
\end{equation}
Here, we choose $\beta>0$ such that $\left\|\Dh^{-\beta}\right\|_{L^{\infty}(\Mh)}$ and the relevant $L^r(\Mh)$-norms are finite; the local form requires the LP decomposition setup of Section~\ref{SEC_noncommutative_littlewood_paley}.
As a particular example, we recover Theorem \ref{thm_grafakos} of \cite{GS19}  in Example \ref{EX_case_of_Rn}.

For an $\LL$-wave equation $\frac{\partial^2{u}}{{\partial t^2}} + \LL u = 0, \partial u=u_1$, we prove a time-decay estimate for the propagator 
$u(t) = \frac{\sin(t\cdot \sqrt{\LL})}{\sqrt{\LL}} u_1 + \cos(t\sqrt{\LL}) u_0$.
$\LL^{-\beta}\frac{\sin t\sqrt{\LL}}{\sqrt{\LL}}$ (Theorem~\ref{THM_time_rescaling}): under a Weyl law $\tau(E_{(\kappa,s)}(\LL))\cong s^\alpha$ and $\frac{1}{r}\geq \frac{\beta}{\alpha}$, $0<\gamma\leq 2$,
\begin{equation}\label{intro:wave-decay}
\|u(t)\|_{L^q(\M)}
\lesssim
(C_{\gamma,p,q,\beta,\kappa,\LL}\cdot t^{1-\gamma\beta -\frac{\alpha}{\beta}\gamma \frac{1}{r}} + 1) (\|\LL^{\beta} u_1\|_{L^p(\M)} + \|u_0\|_{L^p(\M)}).
\end{equation}

The structure of the paper is as follows.
Section~\ref{sec:preliminaries} introduces preliminaries on noncommutative $L^p$ and Lorentz spaces and the notion of Fourier structure on von Neumann algebras; it is used in all subsequent sections.
Section~\ref{sec:paley} develops Hausdorff--Young and Paley-type inequalities under the Fourier structure; these feed into Section~\ref{sec:hardy-littlewood} (Hardy--Littlewood) and into Section~\ref{sec:mihlin} (multiplier theorems).
Section~\ref{sec:hardy-littlewood} establishes Hardy--Littlewood-type inequalities involving $\D^{-\beta}$ and the dual theorem (Theorem~\ref{THM:dual-HLP}); the latter is the key step in the H\"ormander--Mihlin type proofs in Section~\ref{sec:mihlin}.
Section~\ref{sec:mihlin} proves the main H\"ormander--Mihlin multiplier theorems (off-diagonal $L^p$--$L^q$ and diagonal semifinite Lorentz version); it relies on Sections~\ref{sec:preliminaries}--\ref{sec:hardy-littlewood} and gives operator-norm bounds.
Section~\ref{SEC_noncommutative_littlewood_paley} develops a Littlewood--Paley framework (reduction theory, direct integrals, decompositions over the spectrum of an abelian subalgebra) so that multiplier boundedness can be stated in symbolic/fibre form (Theorem~\ref{THM:Mihlin-Hormander-general-noncomm-L-Paley}); it uses the same Fourier structure and multiplier theorems, and Example~\ref{EX_case_of_Rn} shows that this formulation recovers the classical Grafakos--Slav\'ikov\'a condition \cite{GS19} on $\mathbb{R}^n$.
Section~\ref{sec:applications} presents applications to evolution equations (heat and wave propagators), using the multiplier theorems and Lorentz norms; it recovers results from \cite{GS19, FR21, KR2023, DVRT2025}.

We also note that the $L^{p}$-boundedness of Fourier multipliers can be extended to the range $0<p<\infty$ in the context of Hardy spaces; see the results on general graded groups in \cite{HHR23} and references therein.

\section{Preliminaries}
\label{sec:preliminaries}

\subsection{\texorpdfstring{Classical $L^p$ and $L^{p,q}$ spaces}{Classical Lp and Lpq spaces}}
For $0< p\leq\infty$ and $\mathbb{R}_{+}:=(0,\infty),$ the space $L^p(\mathbb{R}_{+})$ refers to the $L^p$-space of equivalence classes of $p$-integrable functions, defined almost everywhere, and $L^{\infty}(\mathbb{R}_{+})$ denotes the space of essentially bounded functions on the positive half-line $\mathbb{R}_{+}.$ For $0<  p,q\leq\infty,$ the Lorentz space $L^{p,q}(\mathbb{R}_{+})$ consists of all complex-valued measurable functions $f$ on $\mathbb{R}_{+}$ for which the following quasi-norm is finite:

$$\|f\|_{L^{p,q}(\mathbb{R}_{+})}=\left\{ \begin{array}{rcl}
         \left(\int\limits_{\mathbb{R}_{+}}\left(t^{\frac{1}{p}}f^*(t)\right)^q\frac{dt}{t}\right)^{\frac{1}{q}}, & \mbox{for}
         & q<\infty; \\ \sup\limits_{t>0}t^\frac{1}{p}f^*(t),\;\;\;\;\;\;\;\;\;\;\;\; & \mbox{for} & q=\infty.
                \end{array}\right.
$$
Here, $f^*$ denotes the decreasing rearrangement of the function $f.$ The quasi-norm in $L^{p,\infty}(\mathbb{R}_{+})$ is also given as follows
\begin{equation}\label{weak-quasi-norm}
    \|f\|^p_{L^{p,\infty}(\mathbb{R}_{+})}=\sup\limits_{t>0}t^p m\{s>0:|f(s)|>t\},
\end{equation}
where $m$ is the Lebesgue measure on $\mathbb{R}_{+}.$ For more information on these spaces and their properties, we refer the reader to \cite{G2008}.
\subsection{\texorpdfstring{Von Neumann algebras and $\tau$-measurable operators}{Von Neumann algebras and tau-measurable operators}}
Let $\HH$ be a complex Hilbert space, and let $B(\HH)$ denote the algebra of all bounded linear operators on $\HH.$ Consider a semi-finite von Neumann algebra $\M\subset B(\HH)$ equipped with a faithful, normal, semi-finite trace $\tau.$ The pair $(\M,\tau)$ is referred to as a \textit{non-commutative measure space}. A closed, densely defined operator $x$ on $\HH$ with domain $\mathrm{dom}(x),$ is called \textit{affiliated} with $\M$ if $u^{\ast}xu=x$ for every unitary operator $u$ in the commutant $\M'$ of $\M$. Such an operator
$x$ is called \textit{$\tau$-measurable} if $x$ is affiliated with $\M$ and, for every $\varepsilon>0$ there exists a projection $p\in \M$ such that $p(\HH)\subset \mathrm{dom}(x)$ and $\tau(\mathbf{1}-p)<\varepsilon.$
The set of all $\tau$-measurable operators is denoted by $L^0(\M)$.

For any element \( x \in L^0(\M) \), let \( x = U |x| \) be its polar decomposition, where \( |x| = (x^* x)^{1/2} \) is the unique positive part. The spectral projection of \( |x| \) corresponding to \( (s, +\infty) \) is denoted by \( E_{(s, +\infty)}(|x|) \).
 The \textit{distribution function} of $x$ is defined as
\begin{equation}
\label{EQ_distribution_function}
d(s;|x|)=\tau(E_{(s, +\infty)}(|x|)), \quad 0\leq s<\infty.
\end{equation}
The function $d(\cdot;|x|):[0,\infty)\rightarrow[0,\infty]$ is non-increasing, and the normality of the trace ensures that
$d(s;|x|)$ is right-continuous.
The \textit{generalized singular value function} of $x$ is given by
$$
\label{EQ_mu_t_distribution_function}
\mu(t;x)=\inf\left\{s>0: d(s;|x|)\leq t\right\}, \quad t>0.
$$
The function $t\mapsto\mu(t;x)$ is decreasing, right-continuous, and serves as the right inverse of the distribution function of $x.$
Moreover, for $x,y \in L^0(\M,\tau),$ the generalized singular value function satisfies (see, for example, \cite[Lemma 2.5 (vii)]{FK}, \cite[Corollary 2.3.16 (b)]{LSZ}):
\begin{equation}\label{sub-multiplicity}
\mu(t+s;xy)\leq\mu(t;x)\cdot\mu(s;y),\quad t,s>0.
\end{equation}

For further details on generalized singular value functions and their properties, we refer the reader to
\cite{DPS, FK, LSZ}.

\subsection{\texorpdfstring{Noncommutative $L^p$ and $L^{p,q}$ spaces associated with a semi-finite von Neumann algebra}{Noncommutative Lp and Lpq spaces associated with a semi-finite von Neumann algebra}}

For $0<p<\infty$, define the noncommutative $L^p$-space
$$L^{p}(\M)=\{x\in L^{0}(\M):\tau\left(|x|^{p}\right)<\infty\},$$
where,
$$\|x\|_{L^{p}(\M)}=\left(\tau\left(|x|^{p}\right)\right)^{\frac1p}, \,\;x\in L^{p}(\M).$$
Then, the space $\left(L^{p}(\M), \|\cdot\|_{L^{p}(\M)}\right)$ is a quasi-Banach (Banach for $p\geq 1$) space. They are the
noncommutative $L^{p}$-spaces associated with $(\M,\tau)$, denoted by $L^{p}(\M,\tau)$ or simply
by $L^{p}(\M)$. As usual, we set $L^{\infty}(\M,\tau)=\M$ equipped with the operator norm. Moreover, we have $$\|x\|_{L^{\infty}(\M)}=\|\mu(x)\|_{L^{\infty}(0,\infty)}=\mu(0;x),$$ $x\in L^{\infty}(\M).$

\begin{definition}Let $0 < p, q\leq \infty$ and let $\M$ be a semi-finite von Neumann algebra. Then, define the non-commutative Lorentz $L^{p,q}(\M)$ space by
$$L^{p,q}(\M):=\{x\in L^{0}(\M):\|x\|_{L^{p,q}(\M)}<\infty\},$$
where
$$
\|x\|_{L^{p,q}(\M)}:=\left(\int_{
\mathbb{R}_+}\left(t^\frac{1}{p}\mu(t;x)\right)^{q}\frac{dt}{t}\right)^{\frac1q},\,\ \text{for} \,\ q<\infty,
$$
and
$$\|x\|_{L^{p,q}(\M)}:=\sup_{t>0}t^{\frac{1}{p}}\mu(t;x), \,\ \text{for} \,\ q=\infty.$$
In other words, we have
\begin{equation}\label{NC Lorentz space norm II}\|x\|_{L^{p,q}(\M)}=\|\mu(x)\|_{L^{p,q}(\mathbb{R}_+)}.
\end{equation}
In particular,
$L^{p,p}(\M)=L^p(\M)$ isometrically for $1<p<\infty.$
\end{definition}
For more details on noncommutative spaces associated with von Neumann algebras, we refer the reader to \cite{DPS, LSZ, PXu}.
The following is a well-known result for the classical Lorentz $L^{p,q}$ spaces which plays a key role in our investigation.
\begin{prop}\label{Holder-ineq-Lorentz-spaces} \cite[Exercises 1.4.19, p. 73]{G2008} \cite[Theorem 2.4.] {kosaki1981non} Let $\M$ be a semi-finite von Neumann algebra and $0<p,q,r\leq \infty,$ $0<s_{1},s_{2}\leq\infty,$    $\frac{1}{p}+\frac{1}{q}=\frac{1}{r}$ and $\frac{1}{s_1}+\frac{1}{s_2}=\frac{1}{s}.$ Then there exists a constant $C_{p,q,s_1,s_2}>0$ (which depends on $p, q,$ $s_1,$ and  $s_{2}$) such that,
\begin{eqnarray}\label{Lorentz-holder}
\|fg\|_{L^{r,s}(\M)}\leq C_{p,q,s_1,s_2}\|f\|_{L^{p,s_1}(\M)}\|g\|_{L^{q,s_2}(\M)}, f\in L^{p,s_1}(\M), g\in L^{q,s_2}(\M).
\end{eqnarray}
\end{prop}
%
%
%
Following \cite[Proposition 1.4.9]{G2008} and properties of semi-finite traces on spectral projections, we immediately get
\begin{prop}
\label{PROP_distribution_function}
Let $\M$ be a semi-finite von Neumann algebra and let $0< p < \infty$ and $0<q\leq \infty$. Then we have the identity
\begin{equation}
\label{EQ_distribution_function_Lorentz}
\|x\|_{L^{p,q}(\M)} =
\begin{cases}
p^{1/q} \left( \int\limits_{0}^{\infty} \big[ d(s;|x|)^{1/p} s \big]^q \, \frac{ds}{s} \right)^{1/q}, & \text{when } q < \infty, \\[1.5ex]
\sup\limits_{\substack{s>0}} \, s \, d(s;|x|)^{1/p}, & \text{when } q = \infty .
\end{cases}
\tag{1.4.5}
\end{equation}
\end{prop}

\subsection{\texorpdfstring{Noncommutative $L^p$ spaces associated with general von Neumann algebras}{Noncommutative Lp spaces associated with general von Neumann algebras}}
In general, a von Neumann algebra need not admit a trace.
In particular, type III von Neumann algebras — which are characterized by the absence of nontrivial finite projections — admit no nontrivial normal tracial states \cite{takesaki1972operator}.

It can be seen that type III von Neumann algebras admit no semi-finite faithful normal trace. For the history of $L^p$-spaces on general von Neumann algebras we refer to \cite{PXu}.

For a general von Neumann algebra that might only admit a weight, Haagerup used the crossed-product of $M$ with respect to its modular automorphism group \cite{haagerup1979lp}.
However, it can be shown that the Haagerup $L^p$-spaces are not compatible with respect to the complex interpolation functor \cite{terp1981lp}, i.e. their natural intersection is empty for two different indices $p_0\neq p_1$.
To overcome this limitation, Kosaki introduced \cite{kosaki1984applications} a new way to construct interpolation pairs by using Radon-Nikodym derivative of trace on the crossed-product with respect to the weight.
He also showed that his space is isometrically isomorphic to Haagerup's.  In \cite{terp1981lp}, Terp extended \cite{terp1982interpolation} a special case of Kosaki's complex interpolation construction to the semi-finite case.
We shall follow the construction by Izumi \cite{izumi1997constructions} which generalizes both \cite{kosaki1984applications}, \cite{terp1981lp,terp1982interpolation}, \cite{hilsum1981espaces} and \cite{haagerup1979lp}.


By considering two linear operators $A_0B \mapsto A_0B$ and $AB_0\mapsto AB_0$, where we vary $A$ for fixed $B_0$ and vary $B$ for fixed $A_0$ and applying the complex interpolation functor, we obtain the following
\begin{prop}
\label{PROP:izumi_holder_inequality}
Let $\M$ be a general von Neumann algebra and let $0<p,q,r\leq \infty$ with $\frac{1}{p}+\frac{1}{q}=\frac{1}{r}$. Then  we have
\begin{equation}
\label{EQ:holder-inequality}
\|f \cdot g\|_{L^r(\M)} \lesssim \|f\|_{L^p(\M)} \cdot \|g\|_{L^q(\M)}.
\end{equation}
\end{prop}
We make the following remark to agree on the affiliation relation on general (possibly non-tracial) von Neumann algebras. 
\begin{rem}
Let $\M$ be a general von Neumann algebra.  
In the semi-finite case, we consider closed, densely defined linear operators affiliated with $\M$ in the usual sense.
However, if $\M$ is not semi-finite, then we instead consider its crossed product $\widetilde{\M} := \M \rtimes_{\sigma^\varphi} \mathbb{R}$ with respect to the modular automorphism group $\sigma_{\omega}$ of a faithful normal weight $\omega$, and affiliation is taken with respect to $\widetilde{\M}$. This convention ensures a unified treatment of measurable operator theory even in the absence of a trace.
In both cases, we shall continue to write $A \,\eta\, \M$ to denote affiliation, by a slight abuse of notation. The set of all $\tau$-measurable operators will still be denoted by $L^0(\M)$.
\end{rem}

\subsection{Fourier structure on von Neumann algebras}
Let $\M$ and $\Mh$ be two von Neumann algebras with normal semi-finite faithful weights $\omega$ and $\widehat{\omega},$ respectively. We denote by $\M_{*}$ the predual of $\M$, i.e. $\left(\M_{*}\right)^{*} \cong \M$.
For each weight $\omega,$ we can associate a Hilbert space $L^{2}(\M)$ through the Gelfand-Naimark-Segal (GNS) construction. 

Since we will be formulating the Fourier transform in terms of the semi-finite traces, let us recall the definition of the dense subspace $\mathcal{I}$ originating from the details of the GNS construction, i.e.
\begin{equation}
\mathcal{I} = \{\omega\in \M_* \colon \text{there exists } M >0 \text{ such that } |\omega(x^*)| \leq M\|\Lambda(x)\|,\quad x\in \mathcal{N}_{\omega}\}.
\end{equation}

\subsection{The Gelfand-Naimark-Segal construction}
Let $\omega$ be a normal, faithful (semi-finite) weight on a von Neumann algebra $\M$. The GNS construction provides a triple 
\[
(L^2(\M,\omega), \pi, \Lambda)
\]
where:

\begin{enumerate}
    \item \textbf{Hilbert Space:} $L^2(\M,\omega)$ is the completion of the set 
    \[
    \mathcal{N}_\omega = \{x\in \M : \omega(x^*x)<\infty\}
    \]
    with respect to the inner product defined by
    \[
    \langle \Lambda(x), \Lambda(y)\rangle = \omega(y^*x) \quad \text{for all } x,y\in\mathcal{N}_\omega.
    \]
    
    \item \textbf{Representation:} The mapping 
    \[
    \pi : \M \to B(L^2(\M,\omega))
    \]
    is defined by left multiplication:
    \[
    \pi(x)\Lambda(y)=\Lambda(xy) \quad \text{for all } x\in \M \text{ and } y\in \mathcal{N}_\omega.
    \]
    Since $\omega$ is faithful, $\pi$ is a faithful *-representation.
    
    \item \textbf{Canonical Map:} $\Lambda : \mathcal{N}_\omega \to L^2(\M,\omega)$ is the natural map associating each $x\in \mathcal{N}_\omega$ with its equivalence class in the Hilbert space.
\end{enumerate}

Thus, by identifying $\M$ with its image $\pi(\M)$, we realize $\M$ as a von Neumann subalgebra of $B(L^2(\M,\omega))$, the algebra of bounded operators on the Hilbert space $L^2(\M,\omega)$.

\begin{definition}
\label{Fourier-structure}(Fourier structure).
Let $\M$ and $\Mh$ be a pair of general von Neumann algebras. Then we shall say that the tuple $(\M, \Mh)$ admits a Fourier structure if:
\begin{itemize}
\item  There is a linear map $\FT_{\Mh}: \mathcal{N}_{\tau}(\M) \to \mathcal{N}_{\widehat{\tau}}(\Mh)$ such that
\begin{equation}
\label{FT-l1-l-infty}
\left\|\FT_{\Mh}[x]\right\|_{\Mh} \leq\|x\|_{\M_{*}},\quad x \in \mathcal{N}_{\omega}(\M)
\end{equation}

\item  There is a linear map $\FTh_{\M}: \mathcal{N}_{\tau}(\Mh) \to \mathcal{N}_{\widehat{\omega}}(\M)$ such that
\begin{equation}
\left\|\FTh_{\M}(x)\right\|_{\M} \leq\|x\|_{\Mh_*},\quad x \in \mathcal{N}_{\widehat{\tau}}(\Mh)
\end{equation}
\item The linear maps $\FT_{\Mh}$ and $\FTh_{\M}$ are inverse to each other in the following way:
\begin{eqnarray}
    \FTh_{\M}(\FT_{\Mh}(x)) = x, \quad x\in \mathcal{N}_{\omega}(\M), \quad  \label{eq:fourier1} \\
    \FT_{\Mh}(\FTh_{\M}(x)) = x, \quad x\in \mathcal{N}_{\widehat{\omega}}(\Mh). \quad \label{eq:fourier2}
\end{eqnarray}

\item The following equalities hold
\begin{eqnarray}\label{Plancherel}
\widehat{\omega}(\FT_{\Mh}(x)^*\FT_{\Mh}(x)) =  \omega(x^*x) \quad x \in \mathcal{N}_{\omega}(\M)  \\
\omega(\FTh_{\M}(x)^*\FTh_{\M}(x)) =  \widehat{\omega}(x^*x) \quad x \in \mathcal{N}_{\widehat{\omega}}(\Mh)
\end{eqnarray}

\item For any linear operator  $A$ affiliated with $\Mh$, we have the following
\begin{equation}
\label{eq:affiliation}
\FT_{\Mh}(A x) = A \cdot \FT_{\Mh}[x],\quad x \in \mathcal{N}_{\omega}(\M)
\end{equation}
\item For any linear operator  $A$ affiliated with $\M$, we have the following
\begin{equation}
\label{eq:affiliation-dual}
\FTh_{\M}(A x) = A \cdot \FTh_{\M}[x],\quad x \in \mathcal{N}_{\widehat{\omega}}(\Mh)
\end{equation}

\end{itemize}
\end{definition}

\begin{rem} The concept of defining a Fourier multiplier on general semi-finite von Neumann algebras $\M$ arises from the paper \cite{AR}. Specifically, for a locally compact separable unimodular group $G$ and its associated group von Neumann algebra $VN(G),$ an operator
$A$ is considered a left Fourier multiplier on $G$ if and only if $A$ is affiliated with the right von Neumann algebra
$VN_R(G)$ and is $\tau$-measurable. Operators affiliated with $VN_R(G)$ are exactly those that remain left-invariant under the action of $G,$ meaning that
$$A\pi_L(g)=\pi_L(g) A, \quad \text{for all}\quad g\in G,$$
where $\pi_L(g)$ represents the left action of $G$ on $L^2(G).$ In other words, left Fourier multipliers on
$G$ are exactly the left-invariant operators that are also
$\tau$-measurable. For more details, one can refer to \cite[Definition 2.16]{AR} and \cite[Remark 2.17]{AR}.
\end{rem}
Below, we provide two examples that satisfy the Fourier structure.
First, we show that on spaces without symmetries (see Example \ref{EX:L-decomposition}), the presence of a suitable linear operator $\D$ is sufficient to study metric properties of Fourier operators. In the second example, the presence of extra coproduct operator on involutive algebra makes it possible to fulfill conditions of Definition~\ref{Fourier-structure}.
\begin{exa}[Fourier structure generated by an affiliated operator \(\D\)]
\label{EX:L-decomposition}
Let \(\M\subset B(\HH)\) be a semi-finite von Neumann algebra on \(\HH\) with a normal faithful semifinite trace \(\tau\) and GNS space \(L^2(\M)\) (identified with \(\HH\)). Let $\D$ be affiliated with $\M$ and \(\langle|\D|\rangle\) the abelian algebra generated by the spectral projections of \(|\D|\) from \(\D=U|\D|\).
\noindent\textbf{(1) Direct integral and dual algebra.}
By the spectral theorem there is a unitary \(W:\HH\to\int_{\mathrm{sp}(|\D|)}^\oplus\HH^\lambda\,d\nu(\lambda)\) such that \(W|\D|W^{-1}=\int^\oplus\lambda\,\Id_{\HH^\lambda}\,d\nu(\lambda)\). Then \(\langle|\D|\rangle\) acts by multiplication in \(\lambda\), and the commutant \(\widehat{\M}:=[\langle\D\rangle]'\cong\int_{\mathrm{sp}(|\D|)}^\oplus B(\HH^\lambda)\,d\nu(\lambda)\) acts fibrewise. The maps \(\FT_{\Mh},\FTh_{\M}\) are implemented by \(W\), so (iii)--(iv) and \eqref{FT-l1-l-infty} in Definition~\ref{Fourier-structure} hold.
\noindent\textbf{(2) $L^2$- and $L^1$-Fourier transform.}
Set \(\FT_{\Mh}[f]:=Wf\) and \(\widehat f(\lambda):=(Wf)(\lambda)\). Then \(\|f\|_{L^2(\M)}^2=\int_{\mathrm{sp}(|\D|)} \|\widehat f(\lambda)\|_{\HH^\lambda}^2\,d\nu(\lambda)\) and \(f=W^*\widehat f\). For the $L^1$--$L^\infty$ map, the expansion in generalized eigenfunctions \cite{LT16} yields a map \(\M_*\to\widehat{\M}\) with \(\|\widehat{\phi}(\lambda)\|_{B(\HH^\lambda)}\le\|\phi\|_{\M_*}\) for \(\phi\in\M_*\).
\noindent\textbf{(3) Multiplier property.}
For any Borel \(m:\mathbb{R}\to\mathbb{C}\), \(W m(\D) W^*=\int^\oplus m(\lambda)\,\Id_{\HH^\lambda}\,d\nu(\lambda)\), so \(\widehat{m(\D)f}(\lambda)=m(\lambda)\widehat f(\lambda)\); thus \(m(\D)\) is a Fourier multiplier with symbol \(m(\lambda)\) (item (vi) of Definition~\ref{Fourier-structure}).
\noindent\textbf{(4) Discrete vs.\ continuous spectrum.}
If \(\mathrm{sp}(\D)=\{\lambda_k\}_{k\in\mathbb{N}}\), then \(\HH\cong\bigoplus_k \HH^{\lambda_k}\) and \(\widehat{\M}\cong\prod_k B(\HH^{\lambda_k})\) (block-diagonal as on compact manifolds). If \(\mathrm{sp}(\D)\subset\mathbb{R}\) with \(\nu\ll d\lambda\), \(\widehat{\M}\) is a direct integral over \(\lambda\). For \emph{normal} \(\D\), the same holds with \(\mathrm{sp}(\D)\subset\mathbb{C}\); the model is unique up to unitary equivalence of the fibre realisation.
Thus \((\M,\widehat{\M})\) and \(\FT_{\Mh},\FTh_{\M}\) satisfy Definition~\ref{Fourier-structure}, and every \(m(\D)\) is a Fourier multiplier with symbol \(m(\lambda)\).
\end{exa}

The existence of a locally compact quantum group structure on $\M$ is sufficient to construct Fourier maps \(\FT_{\Mh}\) and \(\FTh_{\M}\) satisfying \eqref{Fourier-structure}, i.e.
\begin{exa}[Locally compact quantum group]
\label{EX:locally_compact_quantum_group}
Let \(\M\) be a von Neumann algebra equipped with a unital normal \(*\)-homomorphism \(\coproduct\colon \M\to \M\otimes \M\) satisfying \((\coproduct \otimes \Id)\coproduct = (\Id\otimes \coproduct)\coproduct\), and with left- and right-invariant semi-finite faithful weights \(\tau_L\), \(\tau_R\) (the Haar weights), i.e.\
\(\tau_L((\omega\otimes \Id)\coproduct(x))=\tau_L(x)\omega(1)\) and \(\tau_R((\Id \otimes \omega)\coproduct(x))=\tau_R(x)\omega(1)\) for \(\omega\in \M^{+}_*\), \(x\in \mathcal{N}^{+}_{\tau_L}\) resp.\ \(\mathcal{N}^{+}_{\tau_R}\). This quadruplet \((\M,\coproduct,\tau_L,\tau_R)=:\G\) is a locally compact quantum group \cite{KV}. The dual \(\Gh\) is given by \(\widehat{\M}:=\{ (\omega\otimes \Id)(\mathbb{W}) : \omega\in B(\HH)_* \}\) \cite{KV}[Definition 1.6], where the multiplicative unitary \(\mathbb{W}\in B(\HH\otimes\HH)\) encodes the coproduct via \(\mathbb{W}^*(\Lambda(x)\otimes \Lambda(y)) = (\Lambda\otimes\Lambda)\coproduct(y)(x\otimes 1)\), \(x,y\in\mathcal{N}_{\tau_L}\). The Fourier and inverse Fourier maps are
\[
\FT(x)=(\tau_L \otimes \Id)(\mathbb{W}(x\otimes 1)),\quad x\in \mathcal{N}_{\tau}(\G);\qquad \FT^{-1}(y)=(\Id\otimes\widehat{\tau}_R)(\mathbb{W}^*(1 \otimes y)),\quad y\in \Gh.
\]
\end{exa}

\subsection{Hierarchy of examples}
\label{subsec:examples_summary}

The Fourier structure framework (Definition~\ref{Fourier-structure}) encompasses a wide range of classical and noncommutative settings. The existence of a Fourier structure is guaranteed for \emph{any} von Neumann algebra admitting a direct integral decomposition, by the theory of Lenz--Teplyaev \cite{LT16} and Sukochev's framework for affiliated operators \cite{dykema2016reduction}. The choice of underlying space manifests only through the Dirac-type operator $\D$ and its spectral dimension $\alpha$, which governs decay rates in $L^p$--$L^q$ estimates.

The following diagram illustrates the hierarchy of examples:
\begin{center}
\textbf{Von Neumann Algebras with Fourier Structure}\\[2mm]
\begin{tabular}{ccc}
\textsc{Commutative} & \textsc{Quantum/NC} & \textsc{Singular} \\[1mm]
$\downarrow$ & $\downarrow$ & $\downarrow$ \\[1mm]
Groups, Manifolds & Deformations & Fractals, Boundary \\[1mm]
$\swarrow \quad \searrow$ & $\swarrow \quad \searrow$ & $\swarrow \quad \searrow$ \\[1mm]
\footnotesize{Discrete \quad Continuous} & \footnotesize{$\mathbb{T}^n_\theta$ \quad $\mathbb{R}^n_q$} & \footnotesize{p.c.f. \quad Nonharmonic} \\[1mm]
\footnotesize{$(F_n$, Cayley)} & \footnotesize{$\downarrow_{q\to 1}$} & \footnotesize{(Sierpi\'nski)} \\[1mm]
$\downarrow$ & $\mathbb{R}^n$ & \\[1mm]
\footnotesize{Lie Groups} & $\uparrow$ & \\[1mm]
$\swarrow \quad \searrow$ & \footnotesize{$(k,a)$-Dunkl} & \\[1mm]
\footnotesize{Compact \quad Nilpotent} & \footnotesize{$(k\to 0)$} & \\[1mm]
\footnotesize{$(SU(n))$} \quad \footnotesize{(Stratified)} & & \\[1mm]
$\downarrow_{q\text{-deform}}$ & & \\[1mm]
\footnotesize{Quantum Groups} & & \\[1mm]
$\downarrow$ & & \\[1mm]
\footnotesize{Quantum Hypergroups} & &
\end{tabular}
\end{center}

\smallskip
\noindent\textbf{Choice of Dirac operator and spectral dimension.}
In each setting, the Dirac-type operator $\D$ is chosen canonically
from the geometry, and the spectral dimension $\alpha$ is determined
by Weyl-type asymptotics $\tau(E_{(0,s)}(|\D|)) \sim s^\alpha$:
\begin{itemize}
\item \emph{Classical $\mathbb{R}^n$}:
      $\D = (1-\Delta)^{1/2}$, $\alpha = n$ (topological dimension)
      \cite{Stein1970}.
\item \emph{Compact manifold $M^n$}:
      $\D = (1+\Delta_g)^{1/2}$, $\alpha = n$
      (Weyl's law \cite{Weyl1912,Shubin2001}).
\item \emph{Stratified groups}:
      $\D = (1+\mathcal{L})^{1/2}$ where $\mathcal{L}$ is the
      sub-Laplacian, $\alpha = Q$ (homogeneous dimension $Q > n$,
      \cite{FollandStein1982,VSC1992}).
\item \emph{Free groups $F_n$}:
      $\D = (I-\mu)^{-1/2}$ where $\mu$ is the Markov operator,
      $\alpha = 3$ \cite{Kesten1959,Woess2000}.
\item \emph{Fractals}:
      $\D = \sqrt{\Delta_K}$ via resistance forms,
      $\alpha = d_s \notin \mathbb{Z}$
      (non-integer spectral dimension,
      \cite{KigamiLapidus1993,FukushimaShima1992}).
\item \emph{Quantum tori $\mathbb{T}^n_\theta$}:
      $\D = (1+\Delta)^{1/2}$, $\alpha = n$
      (isospectral deformation \cite{ConnesLandi2001,RieffelYM2002}).
\item \emph{Quantum groups $\mathbb{G}_q$}:
      $\D = (1+L)^{1/2}$ where $L$ is the quantum Laplacian,
      $\alpha$ varies with $q$
      \cite{Drinfeld1986,Jimbo1985,VoigtYuncken2020}.
\item \emph{$(k,a)$-Dunkl}:
      $\D = (1+L_{k,a})^{1/2}$,
      $\alpha = n + 2\gamma_k$ where
      $\gamma_k = \sum_{\beta \in R^+} k(\beta)$
      \cite{Dunkl1989,Rosler1998,BenSaidKobayashiOrsted2012}.
\item \emph{Boundary-spectral}:
      $\D$ from eigenvalue asymptotics of the boundary problem
      \cite{Grubb1996}.
\end{itemize}

\smallskip
\noindent\textbf{Key relationships.}
Several examples are related by limits or deformations:
\begin{itemize}
\item $\mathbb{T}^n_\theta \xrightarrow{\theta \to 0} \mathbb{T}^n$
      (classical torus \cite{ConnesLandi2001});
\item $SU_q(n) \xrightarrow{q \to 1} SU(n)$
      (classical Lie group via Drinfeld--Jimbo quantization
      \cite{Drinfeld1986,Jimbo1985});
\item $(k,a)\text{-Dunkl} \xrightarrow{k \to 0} \mathbb{R}^n$
      (classical Fourier \cite{BenSaidKobayashiOrsted2012});
\item $\mathbb{R}^n \subset \text{Stratified}
      \subset \text{Homogeneous}
      \subset \text{Nilpotent Lie groups}$
      \cite{FollandStein1982,VSC1992}.
\end{itemize}

\begin{table}[h]
\centering
\small
\begin{tabular}{|l|c|l|}
\hline
\textbf{Setting} & $\alpha$ & \textbf{Source of $\alpha$} \\
\hline
$\mathbb{R}^n$
  & $n$
  & Topological dimension \cite{Stein1970} \\
Compact manifold $M^n$
  & $n$
  & Weyl's law \cite{Weyl1912,Shubin2001} \\
Heisenberg $\mathbb{H}^n$
  & $2n+2$
  & Homogeneous dimension $Q$ \cite{FollandStein1982} \\
Free group $F_n$
  & $3$
  & Kesten's theorem \cite{Kesten1959,Woess2000} \\
Sierpi\'nski gasket
  & $\log 9/\log 5$
  & Fractal spectral dimension
    \cite{KigamiLapidus1993,FukushimaShima1992} \\
Quantum torus $\mathbb{T}^n_\theta$
  & $n$
  & Isospectral to classical \cite{ConnesLandi2001} \\
$(k,a)$-Dunkl
  & $n + 2\gamma_k$
  & Root system contribution \cite{Dunkl1989,Rosler1998} \\
\hline
\end{tabular}
\caption{Spectral dimension $\alpha$ for key examples,
governing decay rates
$\|e^{-t\mathcal{L}}\|_{L^p \to L^q}
 \lesssim t^{-\alpha(1/p - 1/q)/2}$.}
\label{tab:examples}
\end{table}


\section{Paley and Hausdorff-Young-Paley type inequalities on von Neumann algebras endowed with Fourier structure}
\label{sec:paley}
We start this section with the Hausdorff-Young and Paley-type inequalities. For the results in this section, we shall assume that von Neumann algebras $\M$ can be equipped with a Fourier structure defined in Definition \ref{Fourier-structure}.
\begin{thm}\label{THM:H-Y-ineq} (Hausdorff-Young inequality)\label{H-Y} Let $1\leq p\leq2$ with $\frac{1}{p}+\frac{1}{p'}=1$ and let $(\M,\Mh)$ be a tuple of general von Neumann algebras admitting a Fourier structure. Then for any $x\in L^{p}(\M)$ we have
 \begin{eqnarray}\label{H-Y_ineq}
\|\FT_{\Mh}[x]\|_{L^{p'}(\Mh)} \leq \|x\|_{L^{p}(\M)}.
\end{eqnarray}
 \end{thm}
\begin{proof}
By formulas \eqref{Plancherel} and \eqref{FT-l1-l-infty} in Definition \ref{Fourier-structure}, the Fourier map $\FT_{\Mh}$ is well defined and extends to a bounded map from $L^{1}(\M)$ to $L^{\infty}(\Mh)$, and by the Plancherel identity, we have
\begin{equation*}
\|\FT_{\Mh}[x]\|_{L^{2}(\Mh)}=\|x\|_{L^{2}(\M)},\quad x\in L^{2}(\M).
\end{equation*}
In other words, $\FT_{\Mh}$ is bounded from $L^{1}(\M)$ to $L^{\infty}(\Mh)$ and from $L^{2}(\M)$ to $L^{2}(\Mh).$ Hence, the assertion follows from the noncommutative complex interpolation (see, for example \cite{PXu}).
\end{proof}
If we use real interpolation in Theorem~\ref{H-Y} instead of complex interpolation, then we obtain the following version of the Hausdorff--Young inequality.
\begin{lem}\label{H-Y-Lorentz} Let $1\leq p\leq2$ with $\frac{1}{p}+\frac{1}{p'}=1$, let $1\leq q\leq\infty$, and let $(\M,\Mh)$ be a tuple of semi-finite von Neumann algebras admitting a Fourier structure.  Then for any $x\in L^{p,q}(\M)$ we have
 \begin{eqnarray}\label{R-H-Y_ineq}
\|\FT_{\Mh}[x]\|_{L^{p',q}(\Mh)} \lesssim\|x\|_{L^{p,q}(\M)}.
\end{eqnarray}
 \end{lem}

The following Paley-type inequality follows directly from Lemma \ref{H-Y-Lorentz} and Proposition \ref{Holder-ineq-Lorentz-spaces}
\begin{thm}\label{Paley-ineq} Let $\M$ be a semi-finite von Neumann algebra and let  $1<p\leq2.$ If $\varphi$ is a positive function on $\mathbb{R}_{+}$  satisfying condition
\begin{equation*}
M_{\varphi}:=\sup _{s>0} s \int\limits_{\substack{t \in \mathbb{R}_{+} \\ \varphi(t) \geq s}} d t<\infty,
\end{equation*}
then for all $x \in L^{p}(\M)$ we have
\begin{equation}\label{P_ineq}
\left(\int_{\mathbb{R}_{+}}\mu^p(t;\FT_{\Mh}[x]) \varphi(t)^{2-p}d t\right)^{\frac{1}{p}} \lesssim M_{\varphi}^{\frac{2-p}{p}}\|x\|_{L^{p}(\M)}.
\end{equation}
\end{thm}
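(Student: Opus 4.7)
The approach is to recast the target inequality as the $L^p$-boundedness of a single sublinear operator from a noncommutative $L^p$-space to a weighted commutative $L^p$-space, and then derive it by Marcinkiewicz interpolation between the endpoints $p=1$ and $p=2$. Introduce the measure $d\mu_\varphi(t) := \varphi(t)^2\, dt$ on $\mathbb{R}_+$ and the sublinear map
\begin{equation*}
T(x)(t) := \frac{\mu(t,\FT[x])}{\varphi(t)},\qquad t>0.
\end{equation*}
A direct substitution gives $\|Tx\|_{L^p(d\mu_\varphi)}^p = \int_{\mathbb{R}_+}\mu(t,\FT[x])^p \varphi(t)^{2-p}\,dt$, so the claim is equivalent to showing $\|T\|_{L^p(\M)\to L^p(d\mu_\varphi)}\lesssim M_\varphi^{(2-p)/p}$ for $1<p\leq 2$.

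For the strong $(2,2)$ endpoint, Plancherel \eqref{Prancherel} combined with $\|\FT[x]\|_{L^2(\Mh)}^2 = \int_0^\infty\mu(t,\FT[x])^2\,dt$ yields $\|Tx\|_{L^2(d\mu_\varphi)} = \|x\|_{L^2(\M)}$ immediately. For the weak $(1,1)$ endpoint, the $L^1(\M)\to L^\infty(\Mh)$ bound \eqref{FT-l1-l-infty} built into the Fourier structure forces $\mu(t,\FT[x]) \leq \|x\|_{L^1(\M)}$ for every $t>0$, so
\begin{equation*}
\{t: T(x)(t) > s\} \subset \{t: \varphi(t) < \|x\|_{L^1(\M)}/s\}.
\end{equation*}
A layer-cake computation controlled by the hypothesis $u\,|\{\varphi\geq u\}|\leq M_\varphi$ then gives
\begin{equation*}
\mu_\varphi\bigl\{T(x)>s\bigr\} \leq \int_{\{\varphi<\|x\|_{L^1(\M)}/s\}} \varphi(t)^2\,dt \,\lesssim\, M_\varphi\,\frac{\|x\|_{L^1(\M)}}{s},
\end{equation*}
the desired weak-$(1,1)$ estimate with constant of order $M_\varphi$.

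Applying Marcinkiewicz interpolation to $T$ between the endpoint data $\bigl(1\to L^{1,\infty}(d\mu_\varphi),\text{ norm }\lesssim M_\varphi\bigr)$ and $\bigl(2\to L^2(d\mu_\varphi),\text{ norm }1\bigr)$, the standard formula with interpolation parameter $\theta = \frac{2-p}{p}$ (solving $\frac{1}{p}=\theta+\frac{1-\theta}{2}$) produces exactly $\|Tx\|_{L^p(d\mu_\varphi)} \lesssim M_\varphi^{(2-p)/p}\|x\|_{L^p(\M)}$, which is the statement of the theorem. Quasi-subadditivity of $T$ on noncommutative inputs, required for the splitting step inside Marcinkiewicz, follows from the generalized singular number inequality $\mu(t+s,\FT[x+y]) \leq \mu(t,\FT[x])+\mu(s,\FT[y])$ together with \eqref{sub-multiplicity}.

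The main obstacle I anticipate is not either endpoint estimate, which are short, but rigorously implementing the Marcinkiewicz step when the domain sits in the noncommutative space $L^p(\M)$. The Calder\'on--Hunt decomposition $x = x\,e^{|x|}(\lambda,\infty) + x\,e^{|x|}[0,\lambda]$ has to be carried out through the Borel functional calculus of $|x|$ inside $\M$, and one must verify that the two pieces land in $L^1(\M)$ and $L^2(\M)$ respectively with the correct distributional control on $\mu(\,\cdot\,;x\,e^{|x|}(\ldots))$; this is exactly the noncommutative interpolation framework used in \cite{AR, RT23}, which I would import here.
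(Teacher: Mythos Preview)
Your approach is correct and follows the classical Marcinkiewicz route used for locally compact groups in \cite{AR}, but the paper takes a shorter and genuinely different path. Rather than interpolating the sublinear map $T(x)=\mu(\cdot,\FT[x])/\varphi(\cdot)$, the paper first interpolates the \emph{linear} map $\FT$ to obtain the Lorentz-refined Hausdorff--Young inequality $\|\FT[x]\|_{L^{p',p}(\Mh)}\lesssim\|x\|_{L^p(\M)}$ (Lemma~\ref{H-Y-Lorentz}), and then reads off \eqref{P_ineq} by a single application of H\"older's inequality in Lorentz spaces \eqref{Lorentz-holder} with $\tfrac{1}{p}=\tfrac{1}{p'}+\tfrac{1}{r}$, $r=\tfrac{p}{2-p}$, together with the computation $\|\varphi^{(2-p)/p}\|_{L^{r,\infty}(\mathbb{R}_+)}^{r}=M_\varphi$. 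The payoff of the paper's route is precisely that it avoids the obstacle you flag at the end: since real interpolation is applied to the linear operator $\FT$ between noncommutative $L^p$-spaces, one never has to confront the fact that your $T$ is only quasi-subadditive \emph{with a dilation}, namely $T(x+y)(2t)\varphi(2t)\le Tx(t)\varphi(t)+Ty(t)\varphi(t)$, which does not yield a pointwise bound $|T(x+y)|\le K(|Tx|+|Ty|)$ without a doubling hypothesis on $\varphi$ and therefore forces you to unfold the Marcinkiewicz distribution-function argument by hand rather than invoke it as a black box. In exchange, your argument is more self-contained---it does not need Proposition~\ref{0.1} or Lemma~\ref{H-Y-Lorentz} as separate inputs---and makes the role of the condition $M_\varphi<\infty$ explicit through the layer-cake step.
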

\begin{proof} Let $\frac{1}{r}:=\frac{2}{p}-1.$ Then we have $\frac{1}{p}=\frac{1}{p'} + \frac{1}{r}.$ Hence, applying inequality \eqref{Lorentz-holder} in Proposition \ref{Holder-ineq-Lorentz-spaces} and \eqref{R-H-Y_ineq} in Lemma \ref{H-Y-Lorentz} for any $x\in L^p(\M)$ we obtain
\begin{eqnarray*}\|\mu(\FT_{\Mh}[x])\cdot\varphi(\cdot)^{\frac{2-p}{p}}\|_{L^{p}(\mathbb{R}_+)}&=&\|\mu(\FT_{\Mh}[x])\cdot\varphi(\cdot)^{\frac{2-p}{p}}\|_{L^{p,p}(\mathbb{R}_+)}\\
&\overset{\eqref{Lorentz-holder}}{\lesssim}&\|\varphi(\cdot)^{\frac{2-p}{p}}\|_{L^{r,\infty}(\mathbb{R}_+)}\|\mu(\FT_{\Mh}[x])\|_{L^{p',p}(\mathbb{R}_+)}\\
&\overset{\eqref{NC Lorentz space norm II}}{=}&\|\varphi(\cdot)^{\frac{2-p}{p}}\|_{L^{r,\infty}(\mathbb{R}_+)}\|\FT_{\Mh}[x]\|_{L^{p',p}(\Mh)}\\
&\overset{\eqref{R-H-Y_ineq}}{\lesssim}&\|\varphi(\cdot)^{\frac{2-p}{p}}\|_{L^{r,\infty}(\mathbb{R}_+)}\|x\|_{L^{p}(\M)}.
\end{eqnarray*}
Here, we used the fact that
$$\|\varphi(\cdot)^{\frac{2-p}{p}}\|^r_{L^{r,\infty}(\mathbb{R}_+)}=M_{\varphi}<\infty.$$
Indeed, by an easy calculation we have
\begin{eqnarray*}\|\varphi(\cdot)^{\frac{2-p}{p}}\|^r_{L^{r,\infty}(\mathbb{R}_+)}&\overset{\eqref{weak-quasi-norm}}{=}&\sup\limits_{s>0}s^r m\{\xi>0:\varphi(\xi)^{\frac{2-p}{p}}>s\}\\
&=&\sup\limits_{s>0}s^r m\{\xi>0:\varphi(\xi)>s^{\frac{p}{2-p}}\}\\
&=&\sup\limits_{s>0}s^r m\{\xi>0:\varphi(\xi)>s^{r}\}\\
&=&\sup\limits_{s>0}s m\{\xi>0:\varphi(\xi)>s\}\\
&=&\sup\limits_{s>0}s \int\limits_{\varphi(\xi)>s}d\xi=M_{\varphi}<\infty.
\end{eqnarray*}
This concludes the proof.
\end{proof}
Further, we recall a result on the interpolation of weighted spaces which can be found from \cite[Theorem 5.5.1 and Corollary 5.5.2, pp. 119-120]{BL1976}.
\begin{thm}\label{Thm 3.2} (Interpolation of weighted spaces). Let $ d\nu_0(\xi) = \omega_0(\xi)d\xi$, $d\nu_1(\xi)=\omega_1(\xi)d\xi$, and write $L^p(\omega)=L^p(\omega d\nu)$ for the weight $\omega.$ Suppose that $0<  p_0, p_1 < \infty.$ Then we have $$\left(L^{p_0}(\omega_0),L^{p_1}(\omega_1)\right)_{\theta,p}=  L^p(\omega),$$
where $0 < \theta < 1,$ $\frac{1}{p}=\frac{1-\theta }{p_0}+\frac{\theta}{p_1},$ and $\omega=\omega_0^\frac{p(1-\theta)}{p_0}\cdot\omega_1^\frac{p\theta }{p_1}$.
\end{thm}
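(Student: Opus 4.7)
The plan is to reduce the weighted real interpolation identity to the standard unweighted one $(L^{p_0}(d\xi),L^{p_1}(d\xi))_{\theta,p}=L^p(d\xi)$ via the retraction–coretraction principle. The key observation is that the multiplicative change of unknown $g(\xi)=\omega_0(\xi)^{1/p_0}f(\xi)$ is an isometry $L^{p_0}(\omega_0)\to L^{p_0}(d\xi)$, under which $L^{p_1}(\omega_1)$ is carried to a weighted space $L^{p_1}(\widetilde\omega)$ with $\widetilde\omega=\omega_1\omega_0^{-p_1/p_0}$. Hence by the functoriality of real interpolation it suffices to handle couples of the form $(L^{p_0}(d\xi),L^{p_1}(\widetilde\omega))$, i.e.\ the case $\omega_0\equiv 1$, and then pull the result back via the isometry to recover the general weight.

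Should this route prove awkward, the direct K-functional approach is reliable. For
$$K(t,f)=\inf_{f=f_0+f_1}\bigl(\|f_0\|_{L^{p_0}(\omega_0)}+t\|f_1\|_{L^{p_1}(\omega_1)}\bigr),$$
the fact that both ambient spaces sit over the same underlying measurable set with the same reference measure $d\xi$ allows one to realise the infimum pointwise in $\xi$. For each fixed $\xi$ the problem reduces to a one-variable scalar minimisation solvable by calculus; the resulting expression for $K(t,f)^p$ is an integral in $\xi$ of an explicit function of $|f(\xi)|$, $\omega_0(\xi)$, $\omega_1(\xi)$ and $t$. Substituting into $\int_0^\infty(t^{-\theta}K(t,f))^p\,\frac{dt}{t}$ and swapping the order of integration by Fubini, one performs a $\xi$-dependent change of variable $t\mapsto t\,\omega_0(\xi)^a\omega_1(\xi)^b$ with the exponents fixed by homogeneity; the inner $t$-integral becomes an $f$-independent numerical constant, while the outer $\xi$-integral reproduces $\int|f(\xi)|^p\omega(\xi)\,d\xi$ with precisely $\omega=\omega_0^{p(1-\theta)/p_0}\omega_1^{p\theta/p_1}$.

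The main obstacle I anticipate is making the pointwise reduction of the K-functional fully rigorous: because the infimum is over all \emph{measurable} decompositions, a measurable-selection (or monotone-convergence) argument is needed so that the pointwise minimisers actually assemble into measurable $f_0,f_1$. The secondary, bookkeeping difficulty is to track the exponents through the rescaling so that the weight appears exactly as $\omega_0^{p(1-\theta)/p_0}\omega_1^{p\theta/p_1}$ (rather than merely with an equivalent expression), and that both inclusions $(L^{p_0}(\omega_0),L^{p_1}(\omega_1))_{\theta,p}\hookrightarrow L^p(\omega)$ and its converse come with matching constants so that the identification is an equality of spaces with equivalence of norms.
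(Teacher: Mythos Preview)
The paper does not prove this theorem at all: it is stated as a recalled result from the literature, with the proof deferred entirely to \cite[Theorem 5.5.1 and Corollary 5.5.2]{BL1976}. So there is no ``paper's own proof'' to compare against.

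Your proposal is a correct outline of how the result is actually established in Bergh--L\"ofstr\"om. The direct $K$-functional computation you sketch is essentially their Theorem 5.5.1: one shows that for the couple $(L^{p_0}(\omega_0),L^{p_1}(\omega_1))$ the $K$-functional is equivalent to an expression obtained by pointwise minimisation, and then the Fubini/change-of-variable manoeuvre yields the weighted $L^p$-norm with the stated weight. The measurable-selection issue you flag is handled there by working with the explicit pointwise minimiser (which is a measurable function of $|f|,\omega_0,\omega_1,t$), so no abstract selection theorem is needed. Your first route via retraction--coretraction also works and is arguably cleaner for bookkeeping, but it is not the approach in the cited reference.

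In short: your plan is sound and matches the standard proof in the cited source; for the purposes of this paper, however, a citation suffices and no proof is expected.
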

From this, interpolating between the Hausdorff-Young inequality \eqref{H-Y_ineq} and Paley-type inequality \eqref{P_ineq} in Theorem \ref{Paley-ineq}, we readily obtain the following inequality.

\begin{thm}\label{Thm 3.3}(Hausdorff-Young-Paley inequality). Let $1<p \leq q \leq p^{\prime}<\infty$ with $\frac{1}{p}+\frac{1}{p'}=1.$ If $\varphi$ is a positive function on $\mathbb{R}_{+}$  satisfying condition
\begin{equation*}
M_{\varphi}:=\sup _{s>0} s \int\limits_{\substack{t \in \mathbb{R}_{+} \\ \varphi(t) \geq s}} d t<\infty,
\end{equation*}
then for all $x \in L^{p}(\M)$ we have
\begin{equation}\label{H-Y-P-ineq}
\left(\int_{\mathbb{R}_{+}}\left(\mu(t;\FT_{\Mh}[x]) \varphi(t)^{\frac{1}{q}-\frac{1}{p^{\prime}}}\right)^{q} d t\right)^{\frac{1}{q}} \lesssim M_{\varphi}^{\frac{1}{q}-\frac{1}{p^{\prime}}}\|x\|_{L^{p}(\M)}.
\end{equation}
\end{thm}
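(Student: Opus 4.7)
The strategy is to view the Fourier transform as a sublinear (or linear, depending on the interpretation) map taking $x \in L^{p}(\mathcal{M})$ to the scalar function $t \mapsto \mu(t, \mathcal{F}[x])$ on $\mathbb{R}_+$, and then apply Theorem \ref{Thm 3.2} on the interpolation of weighted $L^p$-spaces to the two endpoint estimates already established. The Hausdorff-Young inequality \eqref{H-Y_ineq} provides one endpoint, namely
\[
\left(\int_{\mathbb{R}_+} \mu(t, \mathcal{F}[x])^{p'}\, dt\right)^{1/p'} = \|\mathcal{F}[x]\|_{L^{p'}(\widehat{\mathcal{M}})} \leq \|x\|_{L^{p}(\mathcal{M})},
\]
i.e.\ the map is bounded into $L^{p'}(\mathbb{R}_+, d\xi)$ with constant $\leq 1$. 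The Paley-type inequality \eqref{P_ineq} provides the other endpoint, namely boundedness into the weighted space $L^{p}(\mathbb{R}_+, \varphi(\xi)^{2-p}\, d\xi)$ with constant $\lesssim M_\varphi^{(2-p)/p}$.

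Next I set the interpolation parameters. Taking $p_0 = p'$, $\omega_0 = 1$, $p_1 = p$, $\omega_1 = \varphi^{2-p}$ in Theorem \ref{Thm 3.2}, I choose $\theta \in [0,1]$ by
\[
\frac{1}{q} = \frac{1-\theta}{p'} + \frac{\theta}{p}, \qquad \text{i.e.}\qquad \theta = \frac{1/q - 1/p'}{1/p - 1/p'},
\]
which is admissible precisely because $p \leq q \leq p'$. Using the identity $1/p - 1/p' = (2-p)/p$, I obtain $\theta(2-p)/p = 1/q - 1/p'$. The interpolated weight is then
\[
\omega = \omega_0^{q(1-\theta)/p_0}\,\omega_1^{q\theta/p_1} = \varphi^{q\theta(2-p)/p} = \varphi^{q(1/q - 1/p')},
\]
so the interpolated target space is $L^{q}(\mathbb{R}_+, \varphi^{q(1/q - 1/p')}\, d\xi)$, whose norm is exactly the left-hand side of \eqref{H-Y-P-ineq}. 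Combining with the multiplicative operator-norm bound from real interpolation gives
\[
\text{norm} \;\lesssim\; 1^{1-\theta}\cdot \bigl(M_\varphi^{(2-p)/p}\bigr)^{\theta} = M_\varphi^{\theta(2-p)/p} = M_\varphi^{1/q - 1/p'},
\]
which matches the constant claimed in \eqref{H-Y-P-ineq}.

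The main point requiring attention is that the map $x \mapsto \mu(\cdot, \mathcal{F}[x])$ is not linear, so the interpolation of weighted $L^p$-spaces must be applied with care. I expect to address this in one of two equivalent ways. The first is to note that the map is quasi-subadditive because of the standard property $\mu(s+t, A+B) \leq \mu(s, A) + \mu(t, B)$, so Marcinkiewicz-type real interpolation remains valid. The second, cleaner route is to interpret both target norms as norms on genuine noncommutative function spaces attached to $\widehat{\mathcal{M}}$ (namely $L^{p'}(\widehat{\mathcal{M}})$ and the weighted variant whose norm is $(\int \mu(\cdot, y)^p \varphi^{2-p}\, dt)^{1/p}$), on which the Fourier transform $\mathcal{F}$ itself is linear; the interpolation of weighted scalar $L^p$-spaces in Theorem \ref{Thm 3.2} then transfers to these noncommutative counterparts via the representation by generalized singular values. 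Either route reduces the remaining work to the routine computation of $\theta$ and $\omega$ carried out above.
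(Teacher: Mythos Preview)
Your proposal is correct and matches the paper's own approach exactly: the paper simply states that \eqref{H-Y-P-ineq} follows by interpolating (via Theorem~\ref{Thm 3.2}) between the Hausdorff--Young inequality \eqref{H-Y_ineq} and the Paley-type inequality \eqref{P_ineq}, without spelling out the parameters or the constant. Your computation of $\theta$, the interpolated weight $\omega=\varphi^{q(1/q-1/p')}$, and the resulting constant $M_\varphi^{1/q-1/p'}$ is precisely the routine verification the paper omits, and your remark on the nonlinearity of $x\mapsto\mu(\cdot,\mathcal{F}[x])$ addresses a technical point the paper glosses over.
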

Naturally, this reduces to the Hausdorff-Young inequality \eqref{H-Y_ineq} when $q=p^{\prime}$ and to the Paley-type inequality in \eqref{P_ineq} when $q=p.$

\section{\texorpdfstring{Hardy-Littlewood-type inequalities associated with $\D$}{Hardy-Littlewood-type inequalities associated with D}}
\label{sec:hardy-littlewood}

As an immediate consequence of Theorem \ref{Paley-ineq}, we obtain a differential formulation of the Hardy-Littlewood inequality. For the rest of this section we shall assume that a linear operator $\D\colon L^2(\M) \to L^2(\M)$ is affiliated with $\Mh$ and its absolute part has bounded inverse $|\D|^{-1}$ which is measurable with respect to $\Mh$, i.e. $|\D|^{-1}\in L^0(\M)$.
Moreover, we shall assume that for some $\beta> 0$, the inverse $|\D|^{-1}$ is integrable in a suitable $L^p$-based space.

For singular traces $\tau_{\omega}$ on general von Neumann algebras, the smallest $\beta$ for which $\|\lvert\D\rvert^{-\beta}\|_{L^{1,\infty}(\Mh,\tau_{\omega})} < \infty$ can be interpreted as the metric dimension associated with a spectral triple structure on $\M$.

Given a pair of von Neumann algebras $(\M,\Mh)$, we shall use a pair of operators $\D$ and $\widehat{\D}$ where the former operator is affiliated with $\Mh$ and the latter is affiliated with $\M$, i.e.
\begin{eqnarray}
\D &\in& L^0(\Mh), \nonumber \\
\widehat{\D} &\in& L^0(\M).
\end{eqnarray}
\begin{thm}\label{THM_HL_Dirac} Let $1<p \leq 2$ and let $(\M,\Mh)$ be a tuple of  von Neumann algebras admitting a Fourier structure.
Then for all $x \in L^{p}(\M)$ we have
\begin{equation}\label{EQ_HL_Dirac}
\left\|\FT_{\Mh}\left(\D^{-\beta} x\right)\right\|_{L^{p}(\Mh)} \lesssim\left\|\D^{-\beta}\right\|_{L^{r}(\Mh)}\|x\|_{L^{p}(\M)}, \quad \frac{1}{r}=\frac{2-p}{p}.
\end{equation}
Here, we choose such $\beta>0$ that the norm $\left\|\D^{-\beta}\right\|_{L^{r}(\Mh)}$ is finite.
\end{thm}

\begin{proof}[Proof of Theorem \ref{THM_HL_Dirac}]
Since the operator $\D$ is affiliated with $\Mh$ and $\FT_{\Mh}$ is the Fourier transform, we get from the property \eqref{eq:affiliation} of the Fourier structure (see Definition~\ref{Fourier-structure})
\begin{equation}
\label{THM:Dirac-HL-EQ-1}
\FT_{\Mh}(\D^{-\beta}x)
=\D^{-\beta}\FT_{\Mh}(x).
\end{equation}
By H\"older inequality \eqref{EQ:holder-inequality}, we get
\begin{equation}
\label{THM:Dirac-HL-EQ-2}
\| \D^{-\beta} \FT_{\Mh}(x)\|_{L^p(\Mh)}
\leq
\| \D^{-\beta}\|_{L^{r}(\Mh)}
\| \FT_{\Mh}(x)\|_{L^{p'}(\Mh)}
\leq \\
\| \D^{-\beta}\|_{L^{r}(\Mh)}
\| x\|_{L^{p}(\M)},
\end{equation}
where in the last inequality we used Hausdorff-Young inequality in the form of Theorem \ref{THM:H-Y-ineq}.
Collecting \eqref{THM:Dirac-HL-EQ-1} and  \eqref{THM:Dirac-HL-EQ-2}, we obtain \eqref{EQ_HL_Dirac}.
This concludes the proof.
\end{proof}
In the case of semi-finite von Neumann algebras, Theorem \ref{THM_HL_Dirac} admits a sharper form.
\begin{thm}\label{THM_HL_Dirac_semifinite}
Let $1<p\le 2$ and define
\[
\frac{1}{r}=\frac{2}{p}-1,
\qquad
\frac{1}{s}=\frac{1}{s_1}+\frac{1}{s_2},
\qquad
0<s,s_1,s_2\le\infty .
\]
Let $(\M,\widehat{\M})$ be a pair of semi-finite von Neumann algebras
admitting a Fourier structure.
Then for every $x\in L^{p}(\M)$ one has
\[
\|\FT_{\widehat{\M}}(\D^{-\beta}x)\|_{L^{p,s}(\widehat{\M})}
\lesssim
\|\lvert \D\rvert^{-\beta}\|_{L^{r,s_1}(\widehat{\M})}
\|x\|_{L^{p,s_2}(\M)} .
\]
\end{thm}
\begin{proof}[Proof of Theorem \ref{THM_HL_Dirac_semifinite}]
    Since the operator $\D$ is affiliated with $\Mh$ and $\FT_{\Mh}$ denotes the
    Fourier transform, property \eqref{eq:affiliation} of the Fourier structure
    (see Definition~\ref{Fourier-structure}) implies
    \begin{equation}\label{THM:Dirac-HL-EQ-1-semifinite}
    \FT_{\Mh}(\D^{-\beta}x)=\D^{-\beta}\FT_{\Mh}(x).
    \end{equation}
    
    Applying the H\"older inequality for Lorentz spaces
    \eqref{Lorentz-holder}, we obtain
    \begin{equation}\label{THM:Dirac-HL-EQ-2-semifinite}
    \|\D^{-\beta}\FT_{\Mh}(x)\|_{L^{p,s}(\Mh)}
    \le
    \|\D^{-\beta}\|_{L^{r,s_1}(\Mh)}
    \|\FT_{\Mh}(x)\|_{L^{p',s_2}(\Mh)} .
    \end{equation}
    
    Next, the Hausdorff--Young inequality (Theorem \ref{THM:H-Y-ineq})
    yields
    \[
    \|\FT_{\Mh}(x)\|_{L^{p',s_2}(\Mh)}
    \lesssim
    \|x\|_{L^{p,s_2}(\M)} .
    \]
    
    Combining this estimate with
    \eqref{THM:Dirac-HL-EQ-1-semifinite} and
    \eqref{THM:Dirac-HL-EQ-2-semifinite} gives
    \begin{align*}
        \|\FT_{\Mh}(\D^{-\beta}x)\|_{L^{p,s}(\Mh)}
        &\lesssim
        \|\D^{-\beta}\|_{L^{r,s_1}(\Mh)} \\
        &\quad \|x\|_{L^{p,s_2}(\M)} .
        \end{align*}
    This proves the theorem.
    \end{proof}
The following is the dual counterpart of Theorem~\ref{THM_HL_Dirac_semifinite},
obtained by applying it to the pair $(\widehat{\M}, \M)$ in place of $(\M, \widehat{\M})$.

\begin{thm}\label{THM:dual-HLP}
Let $1<p\le 2$
and define
\[
\frac{1}{r}
=
\frac{2}{p}-1,
\qquad
\frac{1}{s}
=
\frac{1}{s_1}+\frac{1}{s_2},
\qquad
0<s,s_1,s_2\le\infty.
\]
Let $(\Mh,\M)$ be a tuple of von Neumann algebras
admitting a Fourier structure.
Then we have
\begin{equation}
\label{EQ:dual-hardy-littlewood-paley-lorentz}
\|x\|_{L^{p,s}(\M)}
\lesssim
\|\Dh^{-\beta}\|_{L^{r,\, s_1}(\M)}
\,
\|\D^{\beta}\FTh_{\Mh}[x]\|_{L^{p,\, s_2}(\Mh)}.
\end{equation}
\end{thm}

\begin{proof}[Proof of Theorem \ref{THM:dual-HLP}]
By Definition~\ref{Fourier-structure} \eqref{eq:fourier1},
\begin{equation}
\label{EQ:dual-HLP-by-def}
x
=
\FTh_{\M}[\FT_{\Mh}(x)].
\end{equation}
By Theorem~\ref{H-Y}, we immediately get
\begin{equation}
\label{EQ:dual-HLP-inverse-HY}
\|\FTh_{\M}[\FT_{\Mh}(x)]\|_{L^{p,s_1}(\M)}
\leq
\|\FT_{\Mh}(x)\|_{L^{p',s_1}(\M)}.
\end{equation}
By the H\"older inequality for Lorentz spaces
\eqref{Lorentz-holder}
in Proposition~\ref{Holder-ineq-Lorentz-spaces},
with $\frac{1}{p'}=\frac{1}{r}+\frac{1}{p}$
and $\frac{1}{s_2}=\frac{1}{s_1}+\frac{1}{s}$,
\begin{equation}
\label{EQ:dual-HLP-Holder}
\|\FT_{\Mh}(x)\|_{L^{p',s_2}(\Mh)}
\le
\|\lvert\D\rvert^{-\beta}\|_{L^{r,s_1}(\Mh)}
\,
\|\D^{\beta}\FT_{\Mh}(x)\|_{L^{p,s_2}(\Mh)}.
\end{equation}
By the convention before Theorem~\ref{THM_HL_Dirac},
$\D$ is affiliated with $\Mh$
and $\Dh$ with $\M$,
so
\begin{equation}
\label{EQ:dual-HLP-convention}
\|\lvert\D\rvert^{-\beta}\|_{L^{r,s_1}(\Mh)}
=
\|\Dh^{-\beta}\|_{L^{r,s_1}(\M)}.
\end{equation}
Combining
\eqref{EQ:dual-HLP-inverse-HY},
\eqref{EQ:dual-HLP-Holder},
and
\eqref{EQ:dual-HLP-convention},
\begin{equation}
\label{EQ:dual-HLP-combined}
\|x\|_{L^{p,s}(\M)}
\lesssim
\|\Dh^{-\beta}\|_{L^{r,s_1}(\M)}
\,
\|\D^{\beta}\FT_{\Mh}(x)\|_{L^{p,s_2}(\Mh)}.
\end{equation}
Thus, we established \eqref{EQ:dual-hardy-littlewood-paley-lorentz}.
This completes the proof.
\end{proof}

\section{H\"ormander-Mihlin type multiplier theorems, \texorpdfstring{$p\leq q$}{p <= q}}
\label{sec:mihlin}
In this section, we establish the two main results of the paper.

\begin{thm}\label{THM:Mihlin-Hormander-semifinite-p-2-q}
Let $1 < p \leq 2 \leq q < \infty$, let $(\M,\Mh)$ admit a Fourier structure,
and let $\D, A \colon L^2(\M) \to L^2(\M)$ be linear operators affiliated with $\Mh$.
Then for every $x \in L^{p}(\M)$,
\begin{equation}
\label{EQ:MH-Lp-Lq}
\|Ax\|_{L^{q}(\M)}
\lesssim
\|A\|_{L^{r}(\Mh)}
\|x\|_{L^{p}(\M)},
\qquad
\frac{1}{r} = \frac{1}{p} - \frac{1}{q}.
\end{equation}
\end{thm}
\begin{proof}[Proof of Theorem \ref{THM:Mihlin-Hormander-semifinite-p-2-q}]
Since $A$ is affiliated with $\Mh$, property~\eqref{eq:affiliation} of the
Fourier structure gives $\FT_{\Mh}[Af] = A\,\FT_{\Mh}[f]$.
Using this together with the Hausdorff--Young inequality~\eqref{R-H-Y_ineq}
and the H\"older inequality for Lorentz spaces~\eqref{Lorentz-holder}, we estimate
\begin{align}
\|Af\|_{L^q(\M)}
&\overset{\phantom{\eqref{Lorentz-holder}}}{\leq}
 \|Af\|_{L^{q,\,p}(\M)}
 \overset{\eqref{R-H-Y_ineq}}{\leq}
 \bigl\|\FT_{\Mh}[Af]\bigr\|_{L^{q',p}(\Mh)}
\label{EQ:MH-pq-step1}\\
&\overset{\eqref{eq:affiliation}}{=}
 \bigl\|A\,\FT_{\Mh}[f]\bigr\|_{L^{q',p}(\Mh)}
 \overset{\eqref{Lorentz-holder}}{\leq}
 \|A\|_{L^{r,\infty}(\Mh)}
 \bigl\|\FT_{\Mh}[f]\bigr\|_{L^{p',p}(\Mh)}
\label{EQ:MH-pq-step2}\\
&\overset{\eqref{R-H-Y_ineq}}{\lesssim}
 \|A\|_{L^{r,\infty}(\Mh)}\,\|f\|_{L^{p}(\M)}.
\label{EQ:MH-pq-step3}
\end{align}
This gives~\eqref{EQ:MH-Lp-Lq} and completes the proof.
\end{proof}

\begin{thm}
\label{THM:Mihlin-Hormander-general}
Let $(\M,\Mh)$ be a tuple of  von Neumann algebras admitting a Fourier structure.  Let $\widehat{\D} \colon L^2(\M) \to L^2(\M)$ be a linear operator affiliated with $\M$ and let $A\colon L^2(\M) \to L^2(\M)$ be a linear operator affiliated with $\Mh$.
Then for every $x \in L^{p}(\M)$ we have
\begin{equation}
\label{EQ:MH-Lp-Lp}
\|A x\|_{L^{p}(\M)}
\lesssim
\left\|\Dh^{-\beta}\right\|_{L^{\infty}(\Mh)}
\left\|\Dh^{\beta} A
\right\|_{L^{r}(\Mh)}\|x\|_{L^{p}(\M)},
\quad
\frac{1}{r}=2\left|\frac{1}{p}-\frac{1}{2}\right|.
\end{equation}
\end{thm}
\begin{proof}[Proof of Theorem \ref{THM:Mihlin-Hormander-general}]
 Let us first assume $1<p \leq 2$. By Theorem \ref{THM:dual-HLP}, we get
\begin{equation}
\label{PROOF:EQ:MH-step-1}
\|A x\|_{L^{p}(\M)}
\lesssim
\left\|\Dh^{-\beta}\right\|_{L^{\infty}(\Mh)}
\left\|\Dh^{\beta} \FT_{\Mh}[A x]\right\|_{L^{p}(\Mh)}.
\end{equation}
Since $A$ is affiliated with $\Mh$, we apply  property \eqref{eq:affiliation}, we get
\begin{equation}
\label{PROOF:EQ:MH-step-2}
\FT_{\Mh}[A x] = A \FT_{\Mh}[x]
\end{equation}
Applying \eqref{PROOF:EQ:MH-step-2}  and \eqref{EQ:holder-inequality} of Proposition~\ref{PROP:izumi_holder_inequality} for $L^p$ space, we get
\begin{equation}
\label{PROOF:EQ:MH-step-3}
\left\|\Dh^{\beta} \FT_{\Mh}[A x]\right\|_{L^{p}(\Mh)}
=
\left\|\Dh^{\beta} A \FT_{\Mh}[ x]\right\|_{L^{p}(\Mh)}
\leq
\|\Dh^{\beta}A\|_{L^r(\Mh)}
\| \FT_{\M}[x]\|_{L^{p'}(\Mh)}.
\end{equation}
By Hausdorff-Young inequality of Theorem \ref{THM:H-Y-ineq}, we get
\begin{equation}
\label{PROOF:EQ:MH-step-4}
\| \FT_{\M}[x]\|_{L^{p'}(\Mh)} \leq \|x\|_{L^p(\M)}.
\end{equation}
Collecting \eqref{PROOF:EQ:MH-step-1},\eqref{PROOF:EQ:MH-step-2}, \eqref{PROOF:EQ:MH-step-3}, \eqref{PROOF:EQ:MH-step-4}, we obtain \eqref{EQ:MH-Lp-Lp} for the range $1<p \leq 2$.
By the $L^p$-space duality, the same inequality holds true for $2\leq p < \infty$. This completes 
the proof.
\end{proof}
Now, we assume additionally that $\M$ is a semi-finite von Neumann algebra. Then we can obtain a more refined version of Theorem  \ref{THM:Mihlin-Hormander-general}:
\begin{thm}
\label{THM:Mihlin-Hormander-semifinite}
Let $(\M,\Mh)$ be a tuple of  semi-finite von Neumann algebras admitting a Fourier structure.  Let $\Dh \colon L^2(\M) \to L^2(\M)$ be a linear operator affiliated with $\M$ and let $A\colon L^2(\M)\to L^2(\M)$ be a linear operator affiliated with $\Mh$.
Then for every $x \in L^{p}(\M)$ we have
\begin{equation}
\label{EQ:MH-Lp-Lp-semifinite}
\|A x\|_{L^{p, s}(\M)}
\lesssim
\|\Dh^{-\beta}\|_{L^{r,s_1}(\Mh)}
\left\|\Dh^{\beta} A
\right\|_{L^{r, s_3}(\Mh)}\|x\|_{L^{p,\,s_4}(\M)},
\end{equation}
where
$$
\frac{1}{r}=2\left|\frac{1}{p}-\frac{1}{2}\right|, \quad  \frac1s=\frac{1}{s_1} + \frac1s_3 + \frac1{s_4},\quad 0 < s_1, s_3, s_4 \leq \infty.
$$
\end{thm}

\begin{rem} The operator $\Dh$ affiliated with $\M$ and composed with $A$ is measuring frequency space smoothness of $A$.
\end{rem}
\begin{proof}[Proof of Theorem \ref{THM:Mihlin-Hormander-semifinite}]

Let us first assume $1 < p \leq 2$. Since $A$ is affiliated with $\M$,
property~\eqref{eq:affiliation} of the Fourier structure gives
\begin{equation}
\label{EQ:MH-sf-step1}
\D^{\beta}\FT_{\Mh}[Ax]
=
\D^{\beta}A\,\FT_{\Mh}[x],
\qquad
x \in L^p(\M).
\end{equation}
\medskip

Applying Theorem~\ref{THM:dual-HLP}
with $\frac{1}{s}=\frac{1}{s_1}+\frac{1}{s_2}$ and $\frac1r=\frac{2}{p}-1$,
\begin{equation}
\label{EQ:MH-sf-step2}
\|Ax\|_{L^{p,s}(\M)}
\lesssim
\|\Dh^{-\beta}\|_{L^{r,s_1}(\Mh)}
\,
\|\D^{\beta}\FT_{\Mh}[Ax]\|_{L^{p,s_2}(\Mh)}.
\end{equation}
\medskip
Using \eqref{EQ:MH-sf-step1}
and applying \eqref{Lorentz-holder}
with $\frac{1}{s_2}=\frac{1}{s_3}+\frac{1}{s_4}$,
$0 < s_2, s_3, s_4 \leq \infty$,
\begin{equation}
\label{EQ:MH-sf-step3}
\|\D^{\beta}A\,\FT_{\Mh}[x]\|_{L^{p,s_2}(\Mh)}
\leq
\|\D^{\beta}A\|_{L^{r,s_3}(\Mh)}
\,
\|\FT_{\Mh}[x]\|_{L^{p',s_4}(\Mh)}.
\end{equation}
\medskip
Applying \eqref{R-H-Y_ineq},
\begin{equation}
\label{EQ:MH-sf-step4}
\|\FT_{\Mh}[x]\|_{L^{p',s_4}(\Mh)}
\lesssim
\|x\|_{L^{p,s_4}(\M)}.
\end{equation}
\medskip
Collecting
\eqref{EQ:MH-sf-step2},
\eqref{EQ:MH-sf-step3},
and
\eqref{EQ:MH-sf-step4},
we obtain
\begin{equation}
\label{EQ:MH-sf-chain}
\begin{aligned}
\|Ax\|_{L^{p,s}(\M)}
&\overset{\eqref{EQ:MH-sf-step2}}{\lesssim}
\|\Dh^{-\beta}\|_{L^{r,s_1}(\Mh)}
\,
\|\D^{\beta}\FT_{\Mh}[Ax]\|_{L^{p,s_2}(\Mh)}
\\[4pt]
&\overset{\eqref{EQ:MH-sf-step1}}{=}
\|\Dh^{-\beta}\|_{L^{r,s_1}(\Mh)}
\,
\|\D^{\beta}A\,\FT_{\Mh}[x]\|_{L^{p,s_2}(\Mh)}
\\[4pt]
&\overset{\eqref{EQ:MH-sf-step3}}{\leq}
\|\Dh^{-\beta}\|_{L^{r,s_1}(\Mh)}
\,
\|\D^{\beta}A\|_{L^{r,s_3}(\Mh)}
\,
\|\FT_{\Mh}[x]\|_{L^{p',s_4}(\Mh)}
\\[4pt]
&\overset{\eqref{EQ:MH-sf-step4}}{\lesssim}
\|\Dh^{-\beta}\|_{L^{r,s_1}(\Mh)}
\,
\|\D^{\beta}A\|_{L^{r,s_3}(\Mh)}
\,
\|x\|_{L^{p,s_4}(\M)}.
\end{aligned}
\end{equation}
By $L^p$-duality,
the same inequality holds for $2 \leq p < \infty$.
This completes the proof.
\end{proof}

\begin{exa}[Integrability criterion for $\D^{-\beta}$]
\label{Exa_integrability_criterion}
Let $(\M,\tau)$ be a semi-finite von Neumann algebra and let $\D$ be a positive self-adjoint operator affiliated with $\M$ satisfying the spectral asymptotics
$$
\tau\left(E_{(0, s)}(|\D|)\right) \cong s^{\alpha}, \quad s>1,
$$
for some $\alpha > 0$. Then we have
$$
\left\|\D^{-\beta}\right\|_{L^{r,\infty}(\M)}<\infty
$$
if and only if
$$
\frac{1}{r}<\frac{\beta}{\alpha}.
$$
\end{exa}

\section{Noncommutative Littlewood-Paley theory}
\label{SEC_noncommutative_littlewood_paley}
The classical theory of von Neumann algebras \cite{murray1936rings,murray1937rings,neumann1940rings,murray1943rings,von1949rings, dixmier2011neumann} allows us to decompose any von Neumann algebra into a measurable field of von Neumann algebras.
The results obtained in \cite{dykema2016reduction} develop the theory of direct integral decompositions for both bounded and unbounded operators, including foundational results on spectral projections, functional calculus, and operator affiliation with von Neumann algebras. In particular, it is shown that operators (either bounded or affiliated) with a semi-finite (tracial) von Neumann algebra admit a direct integral decomposition.

In the Euclidean setting, the sharp H\"ormander-type condition (Theorem~\ref{thm_grafakos}) is formulated using a Littlewood--Paley decomposition: localisation of the symbol to dyadic annuli via $\widehat{\Psi}(2^{-j}\xi)$ and a supremum over $j$ of (weighted) norms, which allows one to verify multiplier boundedness by checking symbolic conditions instead of a single global operator norm.
To obtain a noncommutative analogue of this formulation and to recover the classical result on $\mathbb{R}^n$ (Example~\ref{EX_case_of_Rn}), we develop a Littlewood--Paley framework in the setting of von Neumann algebras.
We use the reduction theory summarised above, together with the Fourier structure (Section~2) and the multiplier theorems of Sections~5--6, to express $L^p$-boundedness in terms of fibre-wise (symbolic) conditions and to prove Theorem~\ref{THM:Mihlin-Hormander-general-noncomm-L-Paley}.


Let $(X,\mu)$ be a standard Borel measure space with positive measure $\mu$ and let us denote by $L^{\infty}_C(X,\mu)$ the set of essentially bounded measurable complex-valued functions on $(X,\mu)$.
For $f\in L^{\infty}_C(X,\mu)$ let us denote by $T_f$ the operator of multiplication by $f$ acting on $L^2(X,\mu)$.
The operators $A$ of the form $T_f$, i.e. $A=T_f$ for some $f\in L^{\infty}_C(X,\mu)$ are said to be diagonalisable and we denote by $Z$ the abelian algebra generated by operators $T_f$.
The mapping $L^{\infty}_C(X,\mu) \ni f \mapsto T_f $ is a canonical homomorphism of $*$-involutive algebras \cite[Proposition~3, p.~182]{dixmier2011neumann}.

A von Neumann algebra $\M$ is said to be decomposable if it is defined by a measurable field of von Neumann algebras.

Let us fix an abelian subalgebra $Z$ of $\M$, i.e. $Z\subset \M$.
\begin{prop}[\cite{dykema2016reduction}]
\label{PROP:reduction}
Let $\M$ be a von Neumann algebra and let $\omega$ be a normal semi-finite faithful weight on $\M$ and let us denote by $Z$ an abelian subalgebra of $\M$. Then there exists a measure space $(X,\mu)$ with a positive bounded measure $\mu$ such that the following holds
\begin{enumerate}
\item The algebra $\M$ can be decomposed into a measurable field of von Neumann algebras $\M^{\lambda}$
\begin{equation}
\label{EQ_central_decomposition}
\M = \bigoplus_{X}\int \M^{\lambda} d\mu(\lambda)
\end{equation}
\item The weight $\omega$ can be decomposed into a measurable field of weights $\omega^{\lambda}$
\begin{equation}
\label{EQ_weight_decomposition}
\omega = \bigoplus_{X} \int \omega^{\lambda} d\mu(\lambda)
\end{equation}
\end{enumerate}
\end{prop}

One can estimate the Lorentz norm of $A \in L^0(\M)$ via fiber norms of the fiber operators $A(\lambda)$. We now show how to localize and measure the ``smoothness'' of $A$ with respect to $\Dh$.
\begin{definition}
Let $(X,\Sigma,\mu)$ be a $\sigma$-finite measure space, and let 
$T:X\to X$ be a measurable transformation. 
Denote by $\nu := \mu \circ T^{-1}$ the pushforward measure. 
If $\nu \ll \mu$, then by the Radon--Nikodým theorem there exists a density
\[
J_T := \frac{d\nu}{d\mu} \in L^1_{\mathrm{loc}}(\mu),
\]
called the \emph{Radon--Nikodým derivative} of $T$.
We say that $T$ has a \emph{bounded Radon--Nikodým derivative} if there exists
a constant $C>0$ such that
\[
J_T(x) \leq C \quad \text{for $\mu$-a.e.\ $x \in X$}.
\]
Equivalently,
\[
\mu\!\bigl(T^{-1}(A)\bigr) \leq C\,\mu(A), \qquad \forall A \in \Sigma.
\]
\end{definition}

\begin{definition}
Suppose there exists a family of measurable subsets 
$\{E_j\}_{j\in\mathbb{Z}} \subset \Sigma$ of finite measures $\mu(E_j) <+\infty$ and measurable maps 
$T_j:X\to X$, each with uniformly bounded Radon--Nikodým derivatives,
such that
\[
E_i \cap E_{i+1} \subset E_{i+1}, \qquad 
E_i \cap E_{i+2} = \emptyset, \qquad 
T_j^{-1}(E_j) = E_0 \quad \text{for all $j\in\mathbb{Z}$}.
\]
Let us fix a measurable positive function $\Psi$ with support in $E_0$. We introduce the dilations
$$
\Psi_j(x)=\Psi(T^{-1}_j(x)),\quad x \in X
$$
Now, let us define the bounded linear operator $\widehat{\Psi_j} = \bigoplus\limits_{X}\int \Psi_j(x)d\mu(x)$.

\end{definition}
It can be seen that  $X$ admits a decomposition into the disjoint union of the 
even--indexed sets $E_{2k}$:
\[
X = \bigsqcup_{k\in\mathbb{Z}} E_{2k}.
\]
The tuple $(\{(E_j\, T_j)\}_{j\in\mathbb{Z}}, \Psi)$ consisting of sequence $\{(E_j\, T_j)\}_{j\in\mathbb{Z}}$ of pairs $(E_j, T_j)$ and $\Psi$ we shall call Littlewood-Paley-decomposition over $X$.
\begin{definition}[$\Mh$-smoothness]
Let $\Dh$ be a linear operator affiliated with a von Neumann algebra $\M$ and let $A$ be a linear operator affiliated with $\Mh$. Then we shall say that $A$ is $\Dh^{\beta}$ regular if the composition $\Dh^{\beta} A$ is a linear operator affiliated with $\Mh$.
\end{definition}
\begin{prop}
\label{PROP:compute_on_center}
Let $\Dh\in L^0(\M)$ and $A\in L^0(\Mh)$ and let us  fix an abelian subalgebra $Z$ of $\Mh$. 
Let us assume that $\Dh$ can be decomposed into measurable field $\Dh(\lambda)$ of operators over the spectrum $\widehat{Z}$ of $Z$, i.e.
\begin{equation}
\label{EQ_Dh_symbol}
\Dh_j = \bigoplus_{\widehat{Z}}\int \limits \LL(T^{-1}_j\lambda) d\mu(\lambda).
\end{equation}
Let us fix a Littlewood-Paley decomposition $\{E_j, T_j, \Psi_j\}$ over the spectrum $\widehat{Z}$ of $Z$.
Then we have 
\begin{equation}
\label{EQ:compute_on_center}
\tau(E_{(s,+\infty)}(|\Dh_j  A|)
\leq
\int\limits_{E_0} \tau^{\lambda}(E^{\lambda}_{(s,+\infty)}(| \LL(\lambda) A(T_j(\lambda))|)) d\mu(\lambda).
\end{equation}
\end{prop}
Now, we formulate
\begin{prop}
\label{PROP:estimate_on_center}
Let $\M$ be a semifinite von Neumann algebra  with a fixed abelian subalgebra $Z$ such that its spectrum $\widehat{Z}$ allows for a Littlewood-Paley decomposition $E_j,T_j, \Psi_j$. Let $\Dh \in L^0(\M), A \in L^0(\Mh)$. Let $A, \Dh$ and $\widehat{\Psi_j}$ be as in Proposition \ref{PROP:compute_on_center}. Let us assume that the measurable fields $\lambda \mapsto \Mh(\lambda)$ and $\lambda \mapsto \LL(\lambda)$ are given. Then we have
\begin{equation}
\|\Dh_j  A\|_{L^{r,\infty}(\Mh)}
\leq \mu(E_0)^{r-\frac1{r}}
\| \|\LL_0 A(T_j(\lambda)\|_{L^{r,\infty}(\Mh_0)}\|_{L^{r,1}(\widehat{Z})}
.\quad \forall j \in \mathbb{Z}
\end{equation}
\end{prop}

Now, we can formulate version of Theorem \ref{THM:Mihlin-Hormander-semifinite} using elements of noncommutative Littlewood-Paley theory (see Proposition \ref{PROP:compute_on_center}).
\begin{thm}
\label{THM:Mihlin-Hormander-general-noncomm-L-Paley}
Let $(\M,\Mh)$ be a tuple of  semi-finite von Neumann algebras admitting a Fourier structure.  Let $\widehat{\D} \colon L^2(\M) \to L^2(\M)$ be a linear operator affiliated with $\M$ and let $A\colon L^2(\M) \to L^2(\M)$ be a linear operator affiliated with $\Mh$.
Then for every $x \in L^{p}(\M)$ we have
\begin{equation}
\label{EQ:MH-Lp-Lp-Littlewood-Paley}
\|A x\|_{L^{p}(\M)}
\lesssim
\sup_{j\in\mathbb{Z}}
\| \|\LL(\lambda) A(T_j(\lambda)\|_{L^{r,\infty}(\Mh_{\lambda})}\|_{L^{r,1}(\widehat{Z})}
\|x\|_{L^{p}(\M)},
\end{equation}
where 
\begin{equation}
\frac{1}{r}=2\left|\frac{1}{p}-\frac{1}{2}\right|
\end{equation}
and 
\begin{equation}
\Mh = \bigoplus\limits_{\widehat{Z}}\int \Mh_{\lambda}d\mu(\lambda)
\end{equation}
\end{thm}

Now, we test the general theory in the classical case of the Euclidean harmonic analysis on $\mathbb{R}^n$, i.e.
\begin{exa}
\label{EX_case_of_Rn}
We show that Theorem \ref{THM:Mihlin-Hormander-general-noncomm-L-Paley} in the case of $\M=\mathbb{R}^n, \Mh=\widehat{\mathbb{R}}^n$ recovers \cite[Theorem 1.1]{GS19}.

Take $\M=L^{\infty}(\mathbb{R}^n)$ and $\Mh=L^{\infty}(\widehat{\mathbb{R}^n})$, where $\widehat{\mathbb{R}^n}$ denotes the unitary dual of $\mathbb{R}^n$. Let $\Dh=M_{(1+|x|^2)^{s/2}}$, $s\geq 0$, be the operator of multiplication by $(1+|x|^2)^{s/2}$. Fix a measurable strictly positive function $\Psi$ with $\Psi_j(\xi)=\Psi(2^{-j}\xi)$ and $\sum_{j\in\mathbb{Z}}\Psi(2^{-j}\xi) = 1$ for $\xi \in X$. Let $\widehat{\Psi}$ be the operator of convolution with $\Psi$, and set $\LL(\lambda):=(I-\Delta_{\mathbb{R}^n})^{s/2}\widehat{\Psi_0}^{1/s}$. Fix the Littlewood--Paley decomposition on $\widehat{\mathbb{R}^n}$ by $E_j=(2^{j-1}, 2^{j+1})$ and $T_j(x)=2^{j}x$, $x\in\widehat{\mathbb{R}}$. By Theorem~\ref{THM:Mihlin-Hormander-general-noncomm-L-Paley}, for $A$ affiliated with $\VN_L(\mathbb{R}^n)$ we have
\[
\|A\|_{L^p(\mathbb{R}^n)\to L^p(\mathbb{R}^n)}
\;\leq\;
\sup_{j\in\mathbb{Z}}
\left\|
\left[(I-\Delta_{\mathbb{R}^n})^{\frac{s}{2}}\widehat{\Psi}_0\sigma(2^{j}\cdot)\right]
\right\|_{L^{r,\,\infty}(\mathbb{R}^{n})},
\]
where $\frac{1}{r}=2|\frac{1}{p}-\frac{1}{2}|$ (see \eqref{EX_Rn_r_p_q} below). We have $\|\cdot\|_{L^{r,\,\infty}(\mathbb{R}^{n})} \leq \|\cdot\|_{L^{\frac{n}{s},\,\infty}(\mathbb{R}^{n})}$ when $r \leq \frac{n}{s}$, and by the Lorentz embedding $L^{\frac{n}{s},\infty}(\mathbb{R}^{n}) \hookrightarrow L^{\frac{n}{s}, 1}(\mathbb{R}^{n})$,
\begin{equation}
\label{EQ_grafakos_weaker_step_1}
\left\|(I-\Delta_{\mathbb{R}^{n}})^{\frac{s}{2}}\left[\widehat{\Psi} \sigma(2^{j} \cdot)\right]\right\|_{L^{\frac{n}{s}, \infty}(\mathbb{R}^{n})}
\leq
\left\|(I-\Delta_{\mathbb{R}^{n}})^{\frac{s}{2}}\left[\widehat{\Psi} \sigma(2^{j} \cdot)\right]\right\|_{L^{\frac{n}{s}, 1}(\mathbb{R}^{n})}.
\end{equation}
Thus the condition $\frac{s}{n} \geq \frac{1}{r} = 2|\frac{1}{p}-\frac{1}{2}|$ is sufficient for $L^p$-boundedness, i.e.\ we recover the sharp Grafakos--Slav\'ikov\'a condition \cite[Theorem 1.1]{GS19}. The relations are
\begin{equation}
\label{EX_Rn_r_p_q}
\frac{1}{r}=2\left|\frac{1}{p}-\frac{1}{2}\right|,\qquad r \leq \frac{n}{s}.
\end{equation}
\end{exa}

\section{Applications}
\label{sec:applications}
Let $\M$ admit a Fourier structure, that is, there exists a triple $\left(\Mh, \mathcal{F}_{1}, \mathcal{F}_{2}\right)$ satisfying Definition \ref{Fourier-structure}.

\begin{thm}\cite[Theorem 6.1]{AR}\label{Thm 6.1} Let $\LL$ be a closed unbounded operator affiliated with a semifinite von Neumann algebra $\M \subset B(\HH)$. Assume that $\varphi$ is a monotonically decreasing continuous function on $[0,+\infty)$ such that

\begin{align*}
\varphi(0) & =1,  \\
\lim _{u \rightarrow+\infty} \varphi(u) & =0.
\end{align*}
Then for every $1 \leq r<\infty$ we have the equality
\begin{equation*}\label{6.3}
\|\varphi(|\LL|)\|_{L^{r, \infty}(\M)}=\sup _{u>0}\left(\tau\left(E_{(0, u)}(|\LL|)\right)\right)^{\frac{1}{r}} \varphi(u).
\end{equation*}
\end{thm}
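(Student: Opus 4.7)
The plan is to evaluate the quasi-norm $\|\varphi(|\mathcal{L}|)\|_{L^{r,\infty}(\M)}$ directly from the definition
$$\|y\|_{L^{r,\infty}(\M)}=\sup_{t>0}t^{1/r}\mu(t;y),$$
by computing the generalized singular value function of $\varphi(|\mathcal{L}|)$ in terms of the spectral distribution $N(u):=\tau(E_{(0,u)}(|\mathcal{L}|))$ of $|\mathcal{L}|$, and then changing variables $u=\varphi^{-1}(s)$ in the resulting supremum.

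First, I would invoke the Borel functional calculus. Because $\varphi$ is continuous and strictly decreasing on $[0,+\infty)$ with $\varphi(0)=1$ and $\lim_{u\to+\infty}\varphi(u)=0$, it is a homeomorphism of $[0,+\infty)$ onto $(0,1]$, so for every $s\in(0,1)$
$$E_{(s,+\infty)}\bigl(\varphi(|\mathcal{L}|)\bigr)=E_{[0,\varphi^{-1}(s))}(|\mathcal{L}|).$$
Taking traces yields $d(s;\varphi(|\mathcal{L}|))=N(\varphi^{-1}(s))$ for $s\in(0,1)$, while $d(s;\varphi(|\mathcal{L}|))=0$ for $s\geq 1$.

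Next, substituting into $\mu(t;y)=\inf\{s>0:d(s;y)\leq t\}$ and setting $u=\varphi^{-1}(s)$, the infimum over $s$ turns into a supremum over $u$ since $\varphi$ is decreasing, producing
$$\mu(t;\varphi(|\mathcal{L}|))=\varphi(u^{*}(t)),\qquad u^{*}(t):=\sup\{u\geq 0:N(u)\leq t\}.$$
Plugging this back into the Lorentz definition gives
$$\|\varphi(|\mathcal{L}|)\|^{r}_{L^{r,\infty}(\M)}=\sup_{t>0}t\,\varphi(u^{*}(t))^{r},$$
and I would conclude by identifying this with $\sup_{u>0}N(u)\varphi(u)^{r}$ via the monotone correspondence $t\leftrightarrow u=u^{*}(t)$. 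For one inequality, given $u>0$ I take $t\uparrow\lim_{u'\downarrow u}N(u')$ so that $u^{*}(t)\to u$ and use continuity of $\varphi$ to match $t\varphi(u^{*}(t))^{r}$ with the limiting value of $h(u'):=N(u')\varphi(u')^{r}$ from the right; for the reverse inequality, for every $t$ the point $u=u^{*}(t)$ satisfies $N(u')\leq t$ for all $u'<u$, which together with continuity of $\varphi$ bounds $t\varphi(u)^{r}$ by $\sup_{u'}h(u')$ up to a vanishing error.

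The main technical delicacy lies in handling the possible discontinuities of $N(u)$: because $N$ jumps at the eigenvalues of $|\mathcal{L}|$ (where $E_{\{u\}}\neq 0$), the generalized inverse $t\mapsto u^{*}(t)$ has flat plateaus, and the two suprema need not be attained at corresponding points. This is resolved by the pair of limiting arguments above, which exploit the left-continuity of $N$, the right-continuity of the distribution function $d$ noted in the preliminaries, and the continuity of $\varphi$, to show that both suprema coincide with the common value $\sup_{u>0}N(u)\varphi(u)^{r}$. Taking $r$-th roots then yields the claimed identity.
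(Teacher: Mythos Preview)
The paper does not prove this theorem; it is quoted from \cite[Theorem~6.1]{AR} (note the citation in the theorem heading) and is invoked without argument in the proof of Corollary~\ref{Cor 6.2}. So there is no in-paper proof to compare against.

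Your approach is correct in outline and would yield a valid proof, but it takes a detour. The ``main technical delicacy'' you flag --- matching $\sup_{t}t\,\varphi(u^{*}(t))^{r}$ with $\sup_{u}N(u)\varphi(u)^{r}$ across the plateaus of $u^{*}$ --- disappears if you use the distribution-function form of the weak-$L^{r}$ quasi-norm instead of the rearrangement form. The identity
\[
\|y\|_{L^{r,\infty}(\M)}^{r}=\sup_{t>0}t\,\mu(t;y)^{r}=\sup_{s>0}s^{r}\,d(s;y)
\]
is the operator analogue of \eqref{weak-quasi-norm} and holds for any $y\in L^{0}(\M)$. Combined with your own spectral-mapping step $E_{(s,\infty)}(\varphi(|\mathcal L|))=E_{[0,\varphi^{-1}(s))}(|\mathcal L|)$, the bijective change of variable $u=\varphi^{-1}(s)$ gives immediately
\[
\|\varphi(|\mathcal L|)\|_{L^{r,\infty}(\M)}^{r}
=\sup_{0<s<1}s^{r}\,\tau\bigl(E_{[0,\varphi^{-1}(s))}(|\mathcal L|)\bigr)
=\sup_{u>0}\varphi(u)^{r}\,\tau\bigl(E_{[0,u)}(|\mathcal L|)\bigr),
\]
with no limiting arguments needed. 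Two small points remain: the calculation naturally produces $E_{[0,u)}$ rather than the $E_{(0,u)}$ written in the statement (these agree when $|\mathcal L|$ has trivial kernel, which is the intended setting), and your argument tacitly upgrades ``monotonically decreasing'' to ``strictly decreasing'' so that $\varphi^{-1}$ is single-valued --- a harmless assumption that is clearly intended.
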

Let $\LL$ be an arbitrary unbounded linear operator affiliated with $(\M, \tau)$. Then Theorem \ref{Thm 6.1} says that the function $\varphi(|\LL|)$ is necessarily affiliated with $(\M, \tau)$ and $\varphi(|\LL|) \in L^{r, \infty}(\M)$ if and only if the $r$-th power $\varphi^{r}$ of $\varphi$ grows at infinity not faster than $\frac{1}{\tau\left(E_{(0, u)}(|\LL|)\right)}$, i.e. if we have the estimate
\begin{equation*}\label{6.4}
\varphi(u)^{r} \lesssim \frac{1}{\tau\left(E_{(0, u)}(|\LL|)\right)}, \quad u>0.
\end{equation*}
\begin{prop}
\label{prop_lorentz_norm}
Let $\M$ admit a Fourier structure $\left(\Mh, \mathcal{F}_{1}, \mathcal{F}_{2}\right)$. Let $\LL$ be affiliated with $(\Mh, \widehat{\tau})$. Assume that $\varphi$ is as in Theorem \ref{Thm 6.1}. Then we have the inequality

\begin{equation}\label{6.5}
\|\varphi(\LL)\|_{L^{r,\infty}(\Mh)} \leq \sup _{s>0} \varphi(s)\left[\tau\left(E_{(0, s)}(|\LL|)\right)\right]^{\frac1r},
\end{equation}
where $r\geq 1$
\end{prop}
\begin{proof}
The proof follows immediately from combining \cite[Theorem 3.3]{RT23} and Theorem \ref{Thm 6.1}
\end{proof}

Now, we demonstrate the connection of the spectral properties of the operators with the time decay rates of the propagators for the corresponding evolution equations. This is illustrated in the context of the heat equation, where the functional calculus and the application of Theorem~\ref{Thm 6.1} to the family of functions $\{e^{-ts}\}$ for $t>0$ provide the time decay rate for the solution $u= u(t)$ to the heat equation
\begin{equation}\label{evol-equ}\partial_t u(t) + \LL u(t) = 0, \quad u(0) = u_0.
\end{equation}
Let us assume that 
\begin{equation}
\label{EQ_weyl_law}
\tau(E_{(0,s)}(\LL) \cong s^{\alpha}
\end{equation}
For each $t>0,$ we apply Borel functional calculus \cite[Section 2.6]{AR} to obtain
\begin{equation}\label{heat-semigroup}
u(t)=e^{-t\LL}u_{0}.
\end{equation}
It can be verified that $u(t)$ satisfies equation \eqref{evol-equ} and the initial condition. Then, using Theorem \ref{THM:Mihlin-Hormander-semifinite-p-2-q} and Proposition \ref{prop_lorentz_norm}, we derive
\begin{equation}
\|e^{-t \LL} u_0\|_{L^q(\M)}
\leq
\|e^{-t \LL}\|_{L^{r,\infty}(\Mh)}
\|u_0\|_{L^p(\M)}.
\end{equation}
By elementary calculus, we can show
\begin{equation}
\|e^{-t \LL}\|_{L^{r,\infty}(\Mh)} = \sup_{s>0} e^{-ts}s^{\alpha (\frac1p-\frac1q)} = C\, t^{-\alpha (\frac1p-\frac1q)}
\end{equation}
We call equation $\frac{\partial^2{u}}{{\partial t^2}} + \LL u = 0, \partial u=u_1$ with boundary condition $u(0) = u_0$ an $\LL$-wave equation.
It is trivial that its solution is given by:
\[
u(t) = \frac{\sin(t\cdot \sqrt{\LL})}{\sqrt{\LL}} u_1 + \cos(t\sqrt{\LL}) u_0.
\]

\begin{thm}[Time decay for wave propagators]
\label{THM_time_rescaling}
Let $\Mh$ be a semi-finite von Neumann algebra and $\LL \in L^0(\Mh)$. Let us assume that $sp(\bigl|\LL\bigr|) \subset (\kappa, +\infty), \kappa> 0$. We assume
\begin{equation}
\label{EQ_weyl_law_alpha}
\tau(E_{(\kappa,s)}(\LL)) \cong s^{\alpha},\quad \alpha >0.
\end{equation}
Let us assume
\begin{equation}
\label{EQ_time_rescaling_r_alpha_beta}
\frac1r \geq \frac{\beta}{\alpha}.
\end{equation}
Let us fix $0<\gamma \leq 2$.
Then we have
\begin{equation}
\Bigg\|
\LL^{-\beta}\cdot \frac{\sin(t\sqrt{\LL})}{\sqrt{\LL}}
\Bigg\|_{L^{r,\,\infty}(\Mh)}
\leq  C_{\gamma,p,q,\beta,\kappa, \LL}\cdot t^{1-\gamma\beta -\frac{\alpha}{\beta}\gamma \frac1r}.
\end{equation}
\end{thm}
\begin{proof}[Proof of Theorem \ref{THM_time_rescaling}]
\begin{equation}
\label{eq_time_rescaling_proof_step_1}
\Bigg\|
\LL^{-\beta}\cdot \frac{\sin(t\sqrt{\LL})}{\sqrt{\LL}}
\Bigg\|_{L^{r,\,\infty}(\Mh)}
=\sup_{s>0} s \Bigg[\tau(E_{(s,+\infty)}\Bigg(\Bigg|\,\LL^{-\beta}\cdot \frac{\sin(t\sqrt{\LL})}{\sqrt{\LL}}\Bigg|\Bigg)\Bigg]
\end{equation}

Let us introduce dilated operator $\mathcal{Z}_t$ as follows
\begin{equation}
\label{eq_time_rescaling_proof_step_2}
\ZZ_t = t^{-\gamma} \LL, \gamma >0, t>0.
\end{equation}
Let us denote
\begin{equation}
\label{eq_time_rescaling_proof_step_3}
\sinc(x) =
\begin{cases}
\dfrac{\sin x}{x}, & x \ne 0,\\
1, & x = 0.
\end{cases}
\end{equation}
By the spectral mapping theorem, we immediately obtain
\begin{equation}
\label{eq_time_rescaling_proof_step_4}
\begin{aligned}
E_{(s,\infty)}\Bigl(\Bigl|\LL^{-\beta}\tfrac{\sin \bigl(t\sqrt{\LL}\bigr)}{\sqrt{\LL}}\Bigr|\Bigr)
&=
E_{(s,\infty)}\Bigl(t^{1-\gamma\beta }\,\Bigl|
\ZZ^{-\beta}_t
\sinc(t^{1-\frac{\gamma}{2}}\sqrt{\ZZ_t})
\bigr|\Bigr)\\
&= E_{(s t^{\gamma\beta-1},\infty)}
\Bigl(\bigl|
\ZZ^{-\beta}_t
\sinc(t^{1-\frac{\gamma}{2}}\sqrt{\ZZ_t})
\bigr|\Bigr).
\end{aligned}
\end{equation}
Based on \eqref{eq_time_rescaling_proof_step_4}, we can calculate
\begin{equation}
\label{eq_time_rescaling_proof_step_4b}
\begin{aligned}
&\sup_{s>0} s \Bigg[\tau(E_{(s,+\infty)}\Bigg(\Bigg|\,\LL^{-\beta}\cdot \frac{\sin(t\sqrt{\LL})}{\sqrt{\LL}}\Bigg|\Bigg)\Bigg]
&=\\
&\sup_{s>0} s \tau\Bigl(E_{(s t^{\gamma\beta-1},\infty)}\Bigl(\bigl|\ZZ^{-\beta}_t
\sinc(t^{1-\frac{\gamma}{2}}\sqrt{\ZZ_t}
\bigr|\Bigr)\Bigr)
&=\\
&t^{1-\gamma\beta} \sup_{s>0} s \tau\Bigl(E_{(s,+\infty)}\Bigl(\bigl|
\ZZ^{-\beta}_t
\sinc(t^{1-\frac{\gamma}{2}}\sqrt{\ZZ_t}
\bigr|\Bigr)\Bigr)
\end{aligned}
\end{equation}

Thus, we showed that
\begin{equation}
\label{eq_time_rescaling_proof_step_5}
\Bigg\|
\LL^{-\beta}\cdot \frac{\sin(t\sqrt{\LL})}{\sqrt{\LL}}
\Bigg\|_{L^{r,\,\infty}(\Mh)}
=t^{1-\gamma\beta}
\Bigg\|
\ZZ_t^{-\beta}
\sinc(t^{1-\frac{\gamma}{2}}\sqrt{\ZZ_t})
\Bigg\|_{L^{r,\,\infty}(\Mh)}.
\end{equation}

By H\"older inequality, we get
\begin{equation}
\label{eq_time_rescaling_proof_step_6}
\Bigg\|
\ZZ_t^{-\beta}
\sinc(t^{1-\frac{\gamma}{2}}\sqrt{\ZZ_t})
\Bigg\|_{L^{r,\,\infty}(\Mh)}
\leq
\Bigg\|
\ZZ^{-\beta}_t
\Bigg\|_{L^{r,\infty}(\Mh)}
\Bigg\|
\sinc(t^{1-\frac{\gamma}{2}}\sqrt{\ZZ_t})
\Bigg\|_{L^{\infty,\infty}(\Mh)}
\end{equation}
We notice that $\sinc(t^{1-\frac{\gamma}{2}}\sqrt{\ZZ_t})$ is a bounded operator for any $t>0, \gamma\leq 2$, and we can estimate
\begin{equation}
\label{eq_time_rescaling_proof_step_7}
\Bigg\|
\sinc(t^{1-\frac{\gamma}{2}}\sqrt{\ZZ_t})
\Bigg\|_{L^{\infty,\infty}(\Mh)}
\leq C_{\gamma,\LL} < +\infty.
\end{equation}
By the spectral mapping theorem, we have
\begin{equation}
\label{eq_time_rescaling_proof_step_8}
\begin{aligned}
&E_{(s,+\infty)}(\ZZ^{-\beta}_t) \\
=
&E_{(s,+\infty)}(t^{\gamma\beta}\LL^{-\beta}) \\
=
&E_{(0,s^{-\frac1{\beta}})} (t^{\gamma\beta} \LL) \\
&E_{(0,s^{-\frac1{\beta}} t^{-\gamma\beta})} ( \LL) \\
=&(s^{-\frac1{\beta}} t^{-\gamma\beta})^{\alpha}
\cong s^{-\frac{\alpha}{\beta}}t^{-\gamma\beta\alpha},\quad s\geq \kappa.
\end{aligned}.
\end{equation}
Thus, we get  from \eqref{eq_time_rescaling_proof_step_8}
\begin{equation}
\label{eq_time_rescaling_proof_step_9}
\|\ZZ^{-\beta}_t\|_{L^{r,\infty}(\Mh)}
=
\sup_{s>0} s[s^{-\frac{\alpha}{\beta}}t^{-\gamma\beta\alpha}]^{\frac1r}
=t^{-\alpha\beta\gamma \frac1r}
\sup_{s>\kappa} s^{1-\frac{\alpha}{r\beta}} = t^{-\frac{\alpha}{\beta}\gamma \frac1r} \kappa^{1-\frac{\alpha}{\beta r}}
\end{equation}
Collecting \eqref{eq_time_rescaling_proof_step_5} and \eqref{eq_time_rescaling_proof_step_7} and \eqref{eq_time_rescaling_proof_step_9}, we finally obtain
\begin{equation}
\begin{aligned}
\Bigg\|
\LL^{-\beta}\cdot \frac{\sin(t\sqrt{\LL})}{\sqrt{\LL}}
\Bigg\|_{L^{r,\,\infty}(\Mh)} \\
& \leq C_{\gamma,\LL}\cdot
t^{1-\gamma\beta} t^{-\frac{\alpha}{\beta}\gamma \frac1r} \kappa^{1-\frac{\beta}{r}}\\
&=
 C_{\gamma,p,q,\beta,\kappa, \LL}\cdot t^{1-\gamma\beta -\frac{\alpha}{\beta}\gamma \frac1r}.
 \end{aligned}
\end{equation}

\end{proof}

\section{Acknowledgements}
The second and third authors were partially supported by Odysseus and Methusalem grants (01M01021 (BOF Methusalem) and 3G0H9418 (FWO Odysseus)) from Ghent Analysis and PDE center at Ghent University. The second author was also supported by the EPSRC grants EP/R003025/2 and EP/V005529/1.
\printbibliography

\end{document}